\theoremstyle{plain}
\newtheorem{theorem}[thm]{Theorem}
\newtheorem{lemma}[thm]{Lemma}
\newtheorem{propo}[thm]{Proposition}
\newtheorem{ex}[thm]{Example}
\theoremstyle{definition}
\newtheorem{remark}[thm]{Remark}
\newtheoremstyle{defC}%
  {6pt}
  {6pt}
  {\normalfont}
  {}
  {\bfseries}
  {{\bfseries .}}
  {5pt plus 1pt minus 1pt}
  {\thmname{#1} \thmnumber{#2} \thmnote{\normalfont#3}}
\theoremstyle{defC}
\newtheorem{defiC}[thm]{Definition}
\newtheoremstyle{thmC}%
  {6pt}
  {6pt}
  {\itshape}
  {}
  {\bfseries}
  {{\bfseries .}}
  {5pt plus 1pt minus 1pt}
  {\thmname{#1} \thmnumber{#2} \thmnote{\normalfont#3}}
\theoremstyle{thmC}
\newtheorem{thmC}[thm]{Theorem}
\newtheorem{propC}[thm]{Proposition}
\newtheorem{lemmaC}[thm]{Lemma}
\newcommand{\CC}{ \ensuremath{\mathbb {C}} }
\newcommand{\Equiv}{\ensuremath{\mathrm{Equiv}}}
\newcommand{\Conn}{\ensuremath{\mathrm{Conn}}}
\newcommand{\Grpd}{\ensuremath{\mathrm{Grpd}}}
\newcommand{\Cat}{\ensuremath{\mathrm{Cat}}}
\newcommand{\RG}{\ensuremath{\mathrm{RG}}}
\newcommand{\Eq}{ \ensuremath{\mathrm{Eq}} }
\newcommand{\map}[2]{ \ensuremath{ \xymatrix@1@C=15pt{ #1 \ar[r] & #2 } } }
\newcommand{\mono}[2]{ \ensuremath{ \xymatrix@1@C=15pt{ #1 \ar@{ >->}[r] & #2 } } }
\newcommand{\regepi}[2]{ \ensuremath{ \xymatrix@1@C=15pt{ #1 \ar@{>>}[r] & #2 } } }
\begin{document}
\title[Some remarks on connectors and groupoids in Goursat categories]%
{Some remarks on connectors and groupoids\\ in Goursat categories}

\dedicatory{Dedicated to Ji\v r\'{\i}  Ad\' amek on the occasion of his seventieth birthday}

\author[Marino Gran]{Marino Gran\rsuper{a}}
\address{\lsuper{a}Institut de Recherche en Math\'ematique et Physique, Universit\'e Catholique de Louvain,
Belgium}

\email{\{marino.gran,idriss.tchoffo\}@uclouvain.be}

\author[Idriss Tchoffo Nguefeu]{Idriss Tchoffo Nguefeu\rsuper{a}}


\author[Diana Rodelo]{Diana Rodelo\rsuper{b}}
\address{\lsuper{b}Departamento de Matem\'atica, Universidade
do Algarve, 3001-501 Coimbra, Portugal}

\thanks{The third author acknowledges partial financial assistance by Centro de Matem\'{a}tica da
Universidade de Coimbra---UID/MAT/00324/2013, funded by the
Portuguese Government through FCT/MCTES and co-funded by the
European Regional Development Fund through the Partnership
Agreement PT2020.}
\email{drodelo@ualg.pt}

\keywords{Goursat categories, $3$-permutable varieties, Shifting Lemma, connectors, groupoids.}

\subjclass[2000]{
08C05, 
08B05, 
08A30, 
08B10, 
18C05, 
18B99, 
18E10} 

\begin {abstract}
We prove that connectors are stable under quotients in any (regular) Goursat category. As a consequence, the category $\Conn(\mathbb{C})$ of connectors in $\mathbb{C}$ is a Goursat category whenever $\mathbb C$ is. This implies that Goursat categories can be characterised in terms of a simple property of internal groupoids.
\end {abstract}



\maketitle


Over the last twenty years the property of $n$-permutability of congruences in a variety of universal algebras has been investigated from a categorical perspective (see \cite{ckp,mp,JRVdL}, for instance, and references therein). When $\mathbb{C}$ is a regular category, the $2$-permutability property, usually called the \emph{Mal'tsev} property, is a concept giving rise to a beautiful theory, whose main features are collected in \cite{bb}. Many important results still hold when a regular category $\mathbb{C}$ satisfies the strictly weaker property of $3$-permutability, namely the \emph{Goursat} property. A nice feature of a (regular) Goursat category $\mathbb{C}$ is that the lattice of equivalence relations on any object in $\mathbb{C}$ is a modular lattice \cite{ckp}, a property that plays a crucial role in commutator theory~\cite{FM, hg}.

The aim of this paper is twofold: first of all we establish some basic properties of Goursat categories in terms of connectors \cite{bmaconn}, as it was done in \cite{bmaconn} for the case of Mal'tsev categories. These results have turned out to be useful to develop a monoidal approach to internal structures~\cite{ght}. We then give a new characterisation of Goursat categories in terms of properties of (internal) groupoids, on the model of what was done in \cite{ma} in the case of Mal'tsev categories.

In the first section, we recall the main properties of Goursat categories that
will be needed throughout the paper. In Section $2$ we prove that for any
Goursat category $\mathbb{C}$, the category $\Equiv(\mathbb{C})$ of equivalence
relations in $\mathbb{C}$ is also a Goursat category (Proposition
\ref{equiGour}, see also \cite{bourn}). We use this result to give some properties of Goursat categories in terms of connectors in Section $3$. More precisely, we show that, when $\mathbb{C}$ is a Goursat category, then connectors are stable under quotients in $\mathbb{C}$ (Proposition \ref{stabilityepi}), and this implies that the category $\Conn(\CC)$ of connectors in $\CC$ is again a Goursat category (Theorem \ref{conngour}).

We conclude the paper by giving a new characterisation of Goursat categories in terms of properties of groupoids and internal categories (Theorem \ref{caract}). It turns out that a regular category $\mathbb{C}$ is a Goursat category if and only if the category $\Grpd(\mathbb{C})$ of groupoids (equivalently, the category $\Cat(\mathbb{C})$ of internal categories) in $\mathbb{C}$ is closed under quotients in the category $\RG(\mathbb{C})$ of reflexive graphs in $\CC$.


\section{Preliminaries}
In this section we recall some basic definitions and properties of (regular) Goursat categories, needed throughout the article. We shall always assume that the category  $\mathbb{C}$ in which we are working is a \textbf{regular category}: this means that $\CC$ is finitely complete, regular epimorphisms are stable under pullbacks, and kernel pairs have coequalisers. Equivalently, any arrow $f: A\longrightarrow B$ has a unique factorisation $f=i\circ r $ (up to isomorphism), where $r$ is a regular epimorphism and $i$ is a monomorphism and this factorisation is pullback stable; the subobject corresponding to $i$ is called the \textbf{image} of $f$.

A \textbf{relation} $R$ from  $X$ to  $Y$  is a subobject $\langle r_1,r_2 \rangle : R \rightarrowtail X \times Y $. The opposite relation of $R$, denoted $R^o$, is the relation from $Y$ to $X$ given by the subobject $\langle r_2,r_1 \rangle : R \rightarrowtail Y \times X $. A relation $R$ from $X$ to $X$ is called a relation on $X$. We shall identify a morphism $f: X \longrightarrow Y$ with the relation $\langle 1_X,f \rangle: X \rightarrowtail X \times Y$ and write $f^o$ for its opposite relation. Given another relation $\langle s_1, s_2 \rangle : S \rightarrowtail Y \times Z $ from $Y$ to $Z$, one can define the composite relation $SR$  of $R$ and $S$ as the image of the arrow $(r_1 \circ p_1, s_2 \circ p_2): R \times_Y S \longrightarrow X \times Z $, where $(R \times_Y S, p_1, p_2)$ is the pullback of $r_2: R\longrightarrow Y$ along $s_1: S \longrightarrow Y$. With the above notations, any relation $\langle r_1, r_2 \rangle: R \rightarrowtail X \times Y$ can be seen as the relational composite $r_2r_1^o$.

The following properties are well known and easy to prove. We collect them in the following lemma:

\begin{lemma}
\label{lem}
Let $f: X \longrightarrow Y$ be an arrow in a regular category  $\mathbb{C}$,
and let  $ i \circ r $ be its (regular epimorphism, monomorphism) factorisation. Then:
\begin{enumerate}
  \item $f^of$ is the kernel pair of $f$, thus $1_X \leqslant f^of $; moreover, $1_X = f^of$ if and only if $f$ is a monomorphism;
  \item $ff^o$ is $(i,i)$, thus $ff^o \leqslant 1_Y$; moreover, $ff^o = 1_Y$ if and only if $f$ is a  regular epimorphism;
  \item $ff^of = f $ and $f^off^o = f^o$.
\end{enumerate}
\end{lemma}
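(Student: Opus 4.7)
The plan is to unfold the definition of relational composition in $\mathbb{C}$ and to exploit the image factorisation $f = i\circ r$ in order to reduce everything to statements about monomorphisms and regular epimorphisms.

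For item (1), I would compute $f^of$ directly from the definition of the composite. Writing $f$ as the relation $\langle 1_X, f\rangle : X \rightarrowtail X\times Y$ and $f^o$ as $\langle f, 1_X\rangle : X \rightarrowtail Y \times X$, the pullback of $r_2 = f$ along $s_1 = f$ is precisely the kernel pair $(X\times_Y X, \pi_1, \pi_2)$ of $f$, and the induced arrow $(r_1 p_1, s_2 p_2) = (\pi_1,\pi_2) : X\times_Y X \to X \times X$ is already a monomorphism (being the structural map of the kernel pair viewed as a relation). The image factorisation is therefore trivial, so $f^of$ coincides with the kernel pair of $f$ as a subobject of $X \times X$. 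Reflexivity then yields $1_X \leqslant f^of$, and the equality $f^of = 1_X$ is the standard characterisation of monomorphisms via triviality of the kernel pair.

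For item (2), the composite $ff^o$ involves the pullback of $r_2 = 1_X$ along $s_1 = 1_X$, which is simply $X$ with $p_1 = p_2 = 1_X$; hence $ff^o$ is the image of $(f,f) : X \to Y \times Y$. Using $f = ir$ gives $(f,f) = (i,i)\circ r$, with $r$ a regular epimorphism and $(i,i)$ a monomorphism (since $i$ is). By uniqueness of the image factorisation, this is exactly the image factorisation of $(f,f)$, so $ff^o = (i,i)$. Since $(i,i) = \Delta_Y \circ i$ factors through the diagonal, $ff^o \leqslant 1_Y$; equality holds iff $(i,i)\cong \Delta_Y$, i.e.\ iff $i$ is an isomorphism, i.e.\ iff $f$ is a regular epimorphism.

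Item (3) is then a formal consequence. Applying item (1) to the monomorphism $i$ and item (2) to the regular epimorphism $r$ gives $i^o i = 1$ and $rr^o = 1$. Using $(ir)^o = r^o i^o$ together with associativity of relational composition, one computes
\[ ff^of = (ir)(r^o i^o)(ir) = i(rr^o)(i^o i) r = ir = f,\]
and symmetrically $f^off^o = f^o$. The only real subtlety lies in carefully setting up items (1) and (2) from the definition of relational composition (tracking the correct pullbacks and verifying that the induced maps to $X\times X$ and $Y\times Y$ are already, respectively, monic or factorise through a canonical regular-epi/mono pair); once these are in place, item (3) is a routine manipulation.
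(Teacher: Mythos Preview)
Your argument is correct. Note, however, that the paper does not actually prove this lemma: it simply states that the properties are well known and easy to prove, and records them for later use. Your write-up therefore supplies the details that the paper deliberately omits, and the approach you take (computing the relevant pullbacks from the definition of relational composition, then reducing item~(3) to items~(1) and~(2) via $i^o i = 1$ and $rr^o = 1$) is exactly the standard one the authors have in mind.
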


 \begin{defi} A relation $(R, r_1,r_2)$ on an object $X$  is said to be :
\begin{itemize}
  \item  \textbf{reflexive} when there is an arrow $ r: X \longrightarrow R $ such that $ r_1 \circ r = 1_X = r_2 \circ r $;
  \item  \textbf{symmetric} when there is an arrow  $ \sigma :  R \longrightarrow R $ such that $ r_2 = r_1 \circ \sigma $ and $r_1 = r_2 \circ \sigma $;
  \item  \textbf{transitive} when, by considering the following pullback
  $$
     \xymatrix{
     R \times_X R \ar[r]^-{p_2} \ar@{}[dr]|(0.25){\lrcorner} \ar[d]_{p_1}  & R \ar[d]^{r_1} \\ R \ar[r]_{r_2}& X,
  }
$$
there is an arrow $ t : R \times_X R \longrightarrow R$ such that $ r_1 \circ t = r_1 \circ p_1 $ and $r_2 \circ t = r_2 \circ p_2$.
 \item an \textbf{equivalence relation} if $R$ is  reflexive, symmetric and transitive.
\end{itemize}
In particular, a kernel pair $\langle f_1,f_2 \rangle: \Eq(f)\rightarrowtail X \times X$ of a morphism $f: X \longrightarrow Y$ is an equivalence relation.
The equivalence relations that occur as kernel pairs of some morphism in $\mathbb{C}$ are called {\bf effective}.
Let $\Equiv(\mathbb{C})$ be the category whose objects are equivalence relations in  $\mathbb{C}$ and arrows from $\langle r_1,r_2 \rangle : R \rightarrowtail X \times X $ to $\langle s_1,s_2 \rangle : S \rightarrowtail Y \times Y$ are pairs $(f,g)$ of arrows in $\mathbb{C}$  making the following diagram commute

\[
      \xymatrix {
   R  \ar[r]^{g} \ar@<.5ex>[d]^{r_2} \ar@<-.5ex>[d]_{r_1}  & S \ar@<.5ex>[d]^{s_2} \ar@<-.5ex>[d]_{s_1} \\ X  \ar[r]_f & Y.
}
\]
\end{defi}
When $\mathbb{C}$ is a regular category, $(R,r_1,r_2)$ is an equivalence relation on $X$ and $f: X \twoheadrightarrow Y$ a regular epimorphism, we define the \textbf{regular image of $R$ along $f$} to be the relation $f(R)$ on $Y$ induced by the (regular epimorphism, monomorphism) factorisation $\langle s_1, s_2 \rangle \circ \psi$ of the composite $(f\times f)\circ \langle r_1,r_2\rangle$:
\[
      \xymatrix{
R \ar@{.>>}[r]^{\psi} \ar@{ >->}[d]_{\langle r_1,r_2\rangle} & f(R)  \ar@{ >.>}[d]^{\langle s_1,s_2\rangle}\\
X \times X \ar@{>>}[r]_{f \times f} & Y \times Y.
  }
\]
Note that the regular image $f(R)$ can be obtained as the relational composite $f(R)=fRf^o=fr_2r_1^of^o$. When $R$ is an equivalence relation, $f(R)$ is also reflexive and symmetric. In a general regular category $f(R)$ is not necessarily an equivalence relation.
This is the case in a \emph{Goursat category} (Theorem~\ref{CKP}).

\begin{defiC}[\cite{CLP,ckp}]
A regular category $\mathbb{C}$ is called a \textbf{Goursat category} when the equivalence relations in $\mathbb{C}$ are $3$-permutable, i.e. $RSR = SRS $ for any pair of equivalence relations $R$ and $S$ on the same object.
\end{defiC}

The following characterisation will be useful in the sequel:

\begin{thmC}[\cite{ckp}]  \label{CKP} A regular category $\mathbb{C}$ is a Goursat category if and only if for any regular epimorphism $f: X \twoheadrightarrow Y$ and any equivalence relation $R$ on $X$, the regular image $f(R)= fRf^o$ of $R$ along $f$ is an equivalence relation.
\end{thmC}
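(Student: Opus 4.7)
The plan is to prove the equivalence in two directions, working entirely in the relational calculus provided by Lemma~\ref{lem}.

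For the forward implication, assume $\mathbb{C}$ is a Goursat category. Given a regular epimorphism $f: X \twoheadrightarrow Y$ and an equivalence relation $R$ on $X$, one needs to verify that $f(R) = fRf^o$ is reflexive, symmetric, and transitive. Reflexivity follows from $1_Y = ff^o \leqslant fRf^o$, combining $1_X \leqslant R$ with part~(2) of Lemma~\ref{lem}; symmetry is immediate since $R^o = R$. The crucial point is transitivity: one expands
\[
(fRf^o)(fRf^o) = fR(f^of)Rf^o = fRKRf^o,
\]
where $K = f^of = \Eq(f)$ is itself an equivalence relation on $X$. The $3$-permutability of $R$ and $K$ yields $RKR = KRK$, and the identities $fK = ff^of = f$ and $Kf^o = f^off^o = f^o$ from part~(3) of Lemma~\ref{lem} collapse $fKRKf^o$ back to $fRf^o$, as required.

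For the converse, the key idea is to realise $RSR$ itself as a direct image of an equivalence relation along a regular epimorphism, so that the hypothesis supplies the missing transitivity for free. Given equivalence relations $R, S$ on $X$, the reflexivity of $R$ makes both projections $r_1, r_2: R \twoheadrightarrow X$ split, and hence regular, epimorphisms. Pulling $S$ back along $r_1$ yields the equivalence relation $\widetilde{S} = r_1^o S r_1$ on $R$. Writing the first copy of $R$ in $RSR$ as $r_2 r_1^o$ and, using symmetry of $R$, the second as $r_1 r_2^o$, one obtains
\[
RSR = (r_2 r_1^o)\, S\, (r_1 r_2^o) = r_2\, \widetilde{S}\, r_2^o,
\]
which exhibits $RSR$ as the direct image of $\widetilde{S}$ along the regular epimorphism $r_2$. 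By hypothesis this image is an equivalence relation, and a symmetric argument shows that $SRS$ is one too.

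To conclude, note that $RSR$ contains both $R$ (via $1 \leqslant S$ together with $R \cdot R = R$) and $S$ (via $1 \leqslant R$), so as an equivalence relation it dominates the join $R \vee S$; conversely the transitivity of $R \vee S$ forces $RSR \leqslant (R \vee S)^3 = R \vee S$. Hence $RSR = R \vee S$, the same reasoning gives $SRS = R \vee S$, and therefore $RSR = SRS$. The one nontrivial manoeuvre in the whole argument is the identification $RSR = r_2 \widetilde{S}\, r_2^o$; it works precisely because the legs of a reflexive relation are automatically regular epimorphisms, so the hypothesis can be invoked without assuming exactness or the existence of any extra coequalisers.
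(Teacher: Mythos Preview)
The paper does not contain its own proof of Theorem~\ref{CKP}: the result is simply stated with the citation~\cite{ckp} and then used. There is therefore nothing in the paper to compare your argument against directly.

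That said, your proof is correct and is essentially the classical argument from~\cite{ckp}. The forward direction is the standard relational computation: transitivity of $fRf^o$ reduces to $fRKRf^o = fKRKf^o = fRf^o$ via $3$-permutability of $R$ and $K = \Eq(f)$ together with Lemma~\ref{lem}(3). For the converse, your device of writing $RSR = r_2(r_1^o S r_1)r_2^o$ --- using that the legs $r_1, r_2$ of a reflexive relation are split and hence regular epimorphisms, and that $R = r_2 r_1^o = r_1 r_2^o$ by symmetry --- is exactly the trick that makes the hypothesis applicable. Once $RSR$ and $SRS$ are known to be equivalence relations, the sandwich $R, S \leqslant RSR \leqslant R \vee S$ forces $RSR = R \vee S = SRS$. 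Everything is sound; the only minor stylistic point is that the final paragraph of commentary (``the one nontrivial manoeuvre\dots'') is exposition rather than proof and could be dropped in a formal write-up.
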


There are many important algebraic examples of Goursat categories. Indeed, by a classical theorem in \cite{hm}, a variety of universal algebras is a Goursat category precisely when its theory has two ternary operations $r$ and $s$ such that the identities $r(x,y,y)= x$, $r(x,x,y)=s(x,y,y)$ and $s(x,x,y)= y$ hold. Accordingly, the categories of groups, abelian groups, modules over some fixed ring, crossed modules, quasi-groups, rings, associative algebras, Heyting algebras and implication algebras are all Goursat categories.

Any regular \emph{Mal'tsev} category is a Goursat category, thus, in particular, so is any semi-abelian category.

Many interesting properties of Goursat categories can be found in the literature (see \cite{ckp,gr,grod} and references therein).  In particular, the following characterisations will be useful for the development of this work:

\begin{thmC}[\cite{gr}]\label{Goursatpushout}
Let $\mathbb{C}$ be a regular category. The following conditions are equivalent:
\begin{enumerate}
 \item[(i)] $\mathbb{C}$ is a Goursat category;
 \item[(ii)] any commutative diagram where $\alpha$ and $\beta$ are regular epimorphisms and $f$ and $g$ are split epimorphisms in $\mathbb{C}$
  \[
    \xymatrix{
   X \ar@{->>}[r]^{\alpha} \ar@<3pt>[d]^{f}  & U \ar@<3pt>[d]^{g} \\ Y \ar@{->>}[r]_{\beta} \ar@<3pt>[u]^{s}& W \ar@<3pt>[u]^t
  }
\]
(which is necessarily a pushout) is a \textbf{Goursat pushout}: the morphism $ \lambda : \Eq(f) \longrightarrow \Eq(g)$ induced by the universal property of kernel pair $\Eq(g)$ of $g$ is a regular epimorphism.
\end{enumerate}
\end{thmC}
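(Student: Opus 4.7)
My approach is to leverage Theorem~\ref{CKP}, which equates the Goursat property with the preservation of equivalence relations under regular images.

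For $(i) \Rightarrow (ii)$: in the given commutative square, since $\alpha$ is a regular epimorphism the image of the induced map $\lambda \colon \Eq(f) \to \Eq(g)$ coincides with the regular image $\alpha(\Eq(f)) = \alpha f^o f \alpha^o$, which is an equivalence relation on $U$ by Theorem~\ref{CKP}. The inclusion $\alpha(\Eq(f)) \leqslant \Eq(g)$ is automatic from $g\alpha = \beta f$, so $\lambda$ is a regular epimorphism precisely when the reverse inclusion holds. For this I plan a short relational calculation: $\alpha\alpha^o = 1_U$ together with $g\alpha = \beta f$ give $g = \beta f \alpha^o$, hence $\Eq(g) = \alpha f^o \Eq(\beta) f \alpha^o$. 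The identity $fs = 1_Y$ forces $sf \leqslant \Eq(f)$ as relations on $X$ (and dually $f^o s^o \leqslant \Eq(f)$), while $gt = 1_W$ makes $t$ a split monomorphism ($t^o t = 1_W$), so combined with $\alpha s = t\beta$ one obtains $\Eq(\beta) = (\alpha s)^o(\alpha s) = s^o \Eq(\alpha) s$. Substituting,
\[
\Eq(g) = \alpha f^o s^o \Eq(\alpha) s f \alpha^o \leqslant \alpha \Eq(f) \Eq(\alpha) \Eq(f) \alpha^o = \alpha(\Eq(f)) \circ \alpha(\Eq(f)),
\]
which collapses to $\alpha(\Eq(f))$ by the transitivity granted by Theorem~\ref{CKP}.

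For $(ii) \Rightarrow (i)$: by Theorem~\ref{CKP} it suffices to show that $f(R)$ is an equivalence relation for every regular epimorphism $f \colon X \twoheadrightarrow Y$ and every equivalence relation $R$ on $X$. Reflexivity and symmetry of $f(R)$ are inherited from $R$; only transitivity requires work. I would apply the hypothesis (ii) to the square
\[
\xymatrix{R \ar@{->>}[r]^{\sigma} \ar@<3pt>[d]^{r_1} & f(R) \ar@<3pt>[d]^{\rho_1} \\ X \ar@{->>}[r]_{f} \ar@<3pt>[u]^{d_R} & Y \ar@<3pt>[u]^{d_{f(R)}}}
\]
in which $\sigma$ is the regular-epimorphism part of $(f\times f)\langle r_1, r_2\rangle$, the vertical split epimorphisms are the first projections, and $d_R$, $d_{f(R)}$ are the reflexivity sections of $R$ and of $f(R)$; the commutation $\sigma d_R = d_{f(R)} f$ is routine. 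By (ii), the induced morphism $\Eq(r_1) \twoheadrightarrow \Eq(\rho_1)$ is a regular epimorphism; exploiting the symmetry of $R$ and of $f(R)$ it may be repackaged as a regular epimorphism $R \times_X R \twoheadrightarrow f(R) \times_Y f(R)$ between objects of composable pairs. Composing the transitivity morphism $R \times_X R \to R$ of $R$ with $\sigma$ yields a morphism $R \times_X R \to f(R)$, which is then shown to descend along that regular epimorphism to the required transitivity morphism $f(R) \times_Y f(R) \to f(R)$.

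The main obstacle I anticipate lies in the descent step of this second implication: one must verify categorically that the composite $R \times_X R \to R \to f(R)$ coequalises the kernel pair of the regular epimorphism $R \times_X R \twoheadrightarrow f(R) \times_Y f(R)$. This is precisely where the ``regular epi'' strength of (ii), as opposed to mere epi, becomes indispensable, since without it one could not descend morphisms along the comparison map on composable pairs.
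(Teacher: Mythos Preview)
The paper does not supply a proof of this theorem: it is quoted from \cite{gr} and used as a black box, so there is no in-paper argument to compare yours against.

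Your proof is nonetheless correct. In $(i)\Rightarrow(ii)$ the relational calculus is clean and valid; the crucial identities $\Eq(\beta)=(\alpha s)^{o}(\alpha s)=s^{o}\Eq(\alpha)s$ (from $\alpha s=t\beta$ with $t$ a split monomorphism) and $sf\leqslant\Eq(f)$, $f^{o}s^{o}\leqslant\Eq(f)$ all hold, and the final transitivity step is exactly what Theorem~\ref{CKP} provides. In $(ii)\Rightarrow(i)$ the descent you flag as the ``main obstacle'' does go through and is not deep: composing the comparison regular epimorphism $\mu\colon R\times_{X}R\twoheadrightarrow f(R)\times_{Y}f(R)$ with the monomorphism into $Y\times Y\times Y$ gives $\langle fr_{1}p_{1},\,fr_{2}p_{1},\,fr_{2}p_{2}\rangle$, so $\Eq(\mu)=\Eq(fr_{1}p_{1})\cap\Eq(fr_{2}p_{1})\cap\Eq(fr_{2}p_{2})$; on the other hand $\langle\rho_{1},\rho_{2}\rangle\circ\sigma t_{R}=\langle fr_{1}p_{1},\,fr_{2}p_{2}\rangle$, whence $\Eq(\mu)\leqslant\Eq(\sigma t_{R})$ and $\sigma t_{R}$ descends along $\mu$. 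The transitivity axioms $\rho_{i}\tau=\rho_{i}\pi_{i}$ then follow by cancelling the epimorphism $\mu$. It would strengthen the write-up to include this short verification rather than leave it as an anticipated obstacle.
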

\noindent
We recall part of Theorem 1.3 in \cite{grod}:

\begin{thmC}[\cite{grod}]\label{newcharact}
Let $\mathbb{C}$ be a regular category. The following conditions are equivalent:
\begin{enumerate}
  \item[(i)] $\mathbb{C}$ is a Goursat category;
  \item[(ii)] for any commutative cube
  \[
      \xymatrix {
     X \times_Y Z \ar@{->>}[rr]^{\delta} \ar@<3pt>[rd] \ar@<3pt>[dd]& {}  & A \ar@<3pt>[rd] \ar@<3pt>@{-->}[dd] & {} \\
     {} & Z \ar@<3pt>[ul]  \ar@{->>}[rr]^(0.3){\gamma}  \ar@<3pt>[dd] & {} & V \ar@<3pt>[ul]  \ar@<3pt>[dd]\\
     X \ar@{-->>}[rr]^(0.7){\alpha} \ar@<3pt>[rd] \ar@<3pt>[uu]& {} & U \ar@<3pt>@{-->}[rd] \ar@<3pt>@{-->}[uu] & {} \\
     {} & Y \ar@{->>}[rr]_{\beta}  \ar@<3pt>[ul] \ar@<3pt>[uu] & {} &  W \ar@<3pt>@{-->}[ul] \ar@<3pt>[uu]
  }
\]
where the left square is a pullback of split epimorphisms, the right square is a commutative square of split epimorphisms and the horizontal arrows $\alpha$, $\beta$, $\gamma$ and $\delta$ are regular epimorphisms (commuting also with the splittings), then the right square is a pullback.
\end{enumerate}
\end{thmC}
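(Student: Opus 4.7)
The plan is to establish the equivalence by reducing each direction to the Goursat pushout characterisation of Theorem~\ref{Goursatpushout}.

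For (ii)$\Rightarrow$(i), I would start from a commutative square of split epimorphisms with regular-epimorphism horizontals $\alpha\colon X\to U$ and $\beta\colon Y\to W$ compatible with the splittings, and aim to show that the induced comparison $\lambda\colon\Eq(f)\to\Eq(g)$ on kernel pairs is a regular epimorphism. I would build a cube satisfying the hypothesis of (ii) by setting $Z=X$, $h=f$, $V=U$, $k=g$ and $\gamma=\alpha$, so that the left face is the kernel pair $\Eq(f)=X\times_Y X$, a pullback of split epimorphisms. Factoring $\lambda=i\circ r$ with $r$ a regular epimorphism and $i$ a monomorphism, I would take $A$ to be this intermediate object, equipped with the two maps $A\to U$ obtained by restricting through $i$ the projections of $\Eq(g)\rightrightarrows U$. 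These are split epimorphisms via $\Delta_U$ (which factors through $A$ because $\alpha$ is a regular epimorphism), and compatibility with the rest of the cube follows from the construction. Applying~(ii), the right face must be a pullback, so $A\cong U\times_W U=\Eq(g)$; hence $i$ is an isomorphism and $\lambda=r$ is a regular epimorphism. By Theorem~\ref{Goursatpushout}, $\mathbb{C}$ is a Goursat category.

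For the converse (i)$\Rightarrow$(ii), let $\pi\colon A\to U\times_W V$ denote the canonical comparison induced by the universal property of the pullback and the commutativity of the right face. I would show that $\pi$ is an isomorphism by proving it is both a regular epimorphism and a monomorphism. For the regular-epimorphism part, the composite $\pi\circ\delta$ coincides with the canonical morphism $\phi\colon X\times_Y Z\to U\times_W V$ between the two pullbacks, which one checks is a regular epimorphism in a Goursat category under the given split-epi and compatibility hypotheses; since $\delta$ is already a regular epimorphism, so is $\pi$. For the monomorphism part, the four remaining faces of the cube (back, front, top and bottom) are all commutative squares of split epimorphisms with regular-epimorphism horizontals compatible with the splittings, hence Goursat pushouts by Theorem~\ref{Goursatpushout}. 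Writing $R=\Eq(X\times_Y Z\to X)$, $S=\Eq(X\times_Y Z\to Z)$ and $T=\Eq(\delta)$ as equivalence relations on $X\times_Y Z$, the back-face and top-face Goursat pushouts give the relational identities $\Eq(\alpha\circ\pi_X)=TRT$ and $\Eq(\gamma\circ\pi_Z)=TST$, so that $\Eq(\pi\circ\delta)=TRT\cap TST$. Collapsing this intersection to $T$ via $3$-permutability, the pullback identity $R\cap S=\Delta$ from the left face, and the compatibility of the splittings then gives $\Eq(\pi)=\Delta_A$, so that $\pi$ is a monomorphism.

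The main obstacle will be the collapsing step of the monomorphism argument. Three-permutability by itself does not force $TRT\cap TST\leqslant T$; the argument must combine it with the pullback identity on the left face and with the compatibility of the splittings (the latter rules out degenerate choices of $A$ and $\delta$ which would otherwise admit a non-trivial kernel pair for $\pi$), and weaving all these ingredients together is the delicate part of the proof.
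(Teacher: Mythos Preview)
First, note that the present paper does not actually prove Theorem~\ref{newcharact}: it is quoted without proof from~\cite{grod}, so there is no in-paper argument to compare your proposal against. What follows is therefore an assessment of the sketch on its own merits.

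Your argument for (ii)$\Rightarrow$(i) is sound. Specialising the cube so that the left face is the kernel-pair square $\Eq(f)=X\times_Y X$ and taking $A$ to be the regular image of the comparison $\lambda\colon\Eq(f)\to\Eq(g)$ does produce a cube of the required shape (the diagonal $\Delta_U$ indeed factors through $A$ because $A$ is the image of $\lambda$ and $\lambda\circ\Delta_X=\Delta_U\circ\alpha$), and the conclusion that the right face is a pullback forces the inclusion $i\colon A\rightarrowtail\Eq(g)$ to be an isomorphism, whence $\lambda$ is a regular epimorphism and Theorem~\ref{Goursatpushout} applies.

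The direction (i)$\Rightarrow$(ii), however, is genuinely incomplete, and in two places. The minor gap is the bald assertion that the canonical comparison $\phi\colon X\times_Y Z\to U\times_W V$ is a regular epimorphism. This is true in a Goursat category, but it is not a one-line consequence of Theorem~\ref{Goursatpushout} and needs an argument (e.g.\ by factoring $\phi$ through an intermediate pullback and combining pullback-stability of regular epimorphisms with a Goursat pushout). The substantive gap is the ``collapsing step'' that you yourself isolate. You need $TRT\cap TST\leqslant T$; you correctly observe that $3$-permutability together with $R\cap S=\Delta$ does \emph{not} force this in general, and you point to the compatibility of the splittings as the extra ingredient, but you give no mechanism by which the sections enter the relational computation. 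Equivalently, after pushing forward along $\delta$ the claim becomes $\Eq(A\to U)\cap\Eq(A\to V)=\Delta_A$, i.e.\ that direct image along the regular epimorphism $\delta$ preserves this particular intersection of kernel pairs; that is precisely the content of the theorem, and it is not a formal consequence of having Goursat pushouts on the four lateral faces. As written, the proposal performs the right reduction but halts exactly at the step that carries the weight of the result.
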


\section{Equivalence relations in Goursat categories}
In this section we prove that $\Equiv(\mathbb{C})$ is a Goursat category for any Goursat category $\mathbb{C}$.

The category $\Equiv(\CC)$ is finitely complete whenever $\CC$ is: the terminal object in $\Equiv(\CC)$ is the discrete equivalence relation
$$ \xymatrix {
  1 \ar@<.5ex>[r]^{} \ar@<-.5ex>[r]_{} & 1
}$$
on the terminal object $1$ of $\CC$, and pullbacks are computed ``levelwise''. In particular, the kernel pair of a morphism $(f,g)$ in $\Equiv(\CC)$ is given by the kernel pairs $\Eq(f)$ of $f$ and $\Eq(g)$ of $g$ in $\CC$
\begin{equation}
\label{kernel pair in Equi(C)}
    \vcenter{\xymatrix {
  \Eq(g) \ar@<.5ex>[r]^{g_1} \ar@<-.5ex>[r]_{g_2} \ar@<.5ex>[d]^{\bar{r_2}} \ar@<-.5ex>[d]_{\bar{r_1}} & R  \ar[r]^{g} \ar@<.5ex>[d]^{r_2} \ar@<-.5ex>[d]_{r_1}  & S \ar@<.5ex>[d]^{s_2} \ar@<-.5ex>[d]_{s_1} \\ \Eq(f) \ar@<.5ex>[r]^{f_1} \ar@<-.5ex>[r]_{f_2}& X  \ar[r]_f & Y.
}}
\end{equation}
 Consequently, a morphism $(f,g)$ is a monomorphism in $\Equiv(\CC)$ if and only if $f$ and $g$ are monomorphisms in $\CC$. When $\CC$ is a Goursat category, a similar property holds with respect to regular epimorphisms:

\begin{lemma}\label{epiequiv}
Let $R$ and $S$ be two equivalence relations in a Goursat category $\mathbb{C}$ and $(f,g) : R \rightarrow S$ a morphism
\begin{equation}\label{internalfunctor}
    \vcenter{\xymatrix {
   R  \ar[r]^{g} \ar@<.5ex>[d]^{r_2} \ar@<-.5ex>[d]_{r_1}  & S \ar@<.5ex>[d]^{s_2} \ar@<-.5ex>[d]_{s_1} \\ X  \ar[r]_f & Y
}}
\end{equation}
in $\Equiv(\mathbb{C})$. Then $(f,g)$ is a regular epimorphism in $\Equiv(\mathbb{C})$ if and only if $f$ and $g$ are regular epimorphisms in $\mathbb{C}$.
\end{lemma}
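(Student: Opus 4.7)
The plan is to treat the two implications separately, the sufficiency direction being essentially formal and the necessity direction being where the Goursat hypothesis genuinely enters.

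For the sufficiency direction ($f$ and $g$ regular epimorphisms in $\CC$ $\Rightarrow$ $(f,g)$ regular epimorphism in $\Equiv(\CC)$), I would verify directly that $(f,g)$ is the coequaliser of its kernel pair $(f_i,g_i)\colon(\Eq(g)\rightrightarrows\Eq(f))\rightrightarrows(R\rightrightarrows X)$ displayed in the diagram preceding the lemma. Given any cocone $(h,k)\colon(R,r_1,r_2)\to(T,t_1,t_2)$ in $\Equiv(\CC)$ coequalising the two projections, its components satisfy $hf_1=hf_2$ and $kg_1=kg_2$, so since $f$ and $g$ are coequalisers of their kernel pairs in $\CC$ one obtains unique factorisations $h=h'\circ f$ and $k=k'\circ g$. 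The only non-routine point is that $(h',k')$ is itself a morphism of equivalence relations; this reduces, after expanding the compatibility of $(h,k)$ with $r_i$ and $t_i$, to cancelling $g$ as an epimorphism on the right. No use of Goursat is needed here.

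For the necessity direction the component $f$ comes for free: the forgetful functor $U\colon\Equiv(\CC)\to\CC$ sending $(R\rightrightarrows X)$ to $X$ admits the indiscrete equivalence relation functor $Y\mapsto(Y\times Y\rightrightarrows Y)$ as right adjoint, hence preserves colimits and in particular regular epimorphisms, so $f=U(f,g)$ is a regular epimorphism in $\CC$. To obtain the same conclusion for $g$, I would take the (regular epimorphism, monomorphism) factorisation $g=j\circ p$ in $\CC$ and identify the intermediate object with the regular image $f(R)$ by comparing (regular epimorphism, monomorphism) factorisations of $\langle fr_1,fr_2\rangle=\langle s_1,s_2\rangle g\colon R\to Y\times Y$. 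At this point the Goursat hypothesis enters through Theorem~\ref{CKP}: it guarantees that $f(R)$ is an equivalence relation on $Y$, so the above factorisation of $g$ lifts to a factorisation $(f,g)=(1_Y,j)\circ(f,p)$ inside $\Equiv(\CC)$ in which $(1_Y,j)$ is a monomorphism (both of its components being monomorphisms in $\CC$). Since $(f,g)$ is a regular, hence strong, epimorphism, the monomorphism $(1_Y,j)$ must then be an isomorphism, so $j$ is invertible and $g=jp$ is a regular epimorphism.

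The step I expect to be the main obstacle is precisely the promotion of the (regular epimorphism, monomorphism) factorisation of $g$ in $\CC$ to a factorisation of $(f,g)$ inside $\Equiv(\CC)$: without knowing that the regular image $f(R)$ is transitive this factorisation is not available, and no strong-epi/mono argument can be made. This is exactly where the Goursat assumption on $\CC$ is used, and it explains why the statement is genuinely stronger than what holds in an arbitrary regular category.
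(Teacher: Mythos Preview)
Your proposal is correct and follows essentially the same route as the paper: both directions match, and for the converse you build the (regular epimorphism, monomorphism) factorisation of $(f,g)$ inside $\Equiv(\mathbb{C})$---using Theorem~\ref{CKP} to ensure the intermediate object is an equivalence relation---and then argue that the monic part is invertible. The only minor difference is that the paper factors $f=i\circ q$ first and treats both components at once via $(f,g)=(i,j)\circ(q,\alpha)$, whereas you dispose of $f$ separately by the pleasant observation that the underlying-object functor $U\colon\Equiv(\mathbb{C})\to\mathbb{C}$ has the indiscrete functor as right adjoint, so that your factorisation already has $1_Y$ in the bottom row; this is a nice streamlining but not a different idea.
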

\begin{proof}
When $f$ and $g$ are regular epimorphisms in $\mathbb{C}$, it is not difficult to check that $(f,g)$ is necessarily the coequaliser of its kernel pair in $\Equiv(\mathbb{C})$ given in \eqref{kernel pair in Equi(C)} (one uses the fact that $g = coeq(g_1,g_2)$ and $f = coeq(f_1,f_2)$ in $\mathbb{C}$).

Conversely, let $(f,g)$ be a morphism in $\Equiv(\mathbb{C})$ as in \eqref{internalfunctor} that is a regular epimorphism in $\Equiv(\mathbb{C})$. Consider the kernel pairs of $f$ and $g$, the (regular epimorphism, monomorphism) factorisation $f=i \circ q$ of $f$, and the regular image $(q(R), t_1, t_2)$ of $(R, r_1, r_2)$ along $q$. We obtain the following commutative diagram
\begin{equation}\label{factor}
    \vcenter{\xymatrix {
  \Eq(g) \ar@<.5ex>[r]^{g_1} \ar@<-.5ex>[r]_{g_2} \ar@<.5ex>[dd]^{\bar{r_2}} \ar@<-.5ex>[dd]_{\bar{r_1}} &  R \ar@{->>}[rd]_{\alpha} \ar[rr]^{g} \ar@<.5ex>[dd]^{r_2} \ar@<-.5ex>[dd]_{r_1}  & {} & S \ar@<.5ex>[dd]^{s_2} \ar@<-.5ex>[dd]_{s_1}  \\
  {} & {} & q(R)  \ar@<.5ex>[dd]^(0.3){t_2} \ar@<-.5ex>[dd]_(0.3){t_1} \ar@{.>}@<3pt>[ru]_(0.6){j} & {}\\
   \Eq(f)\ar@<.5ex>[r]^{f_1} \ar@<-.5ex>[r]_{f_2} &  X \ar@{->>}[rd]_q \ar[rr]^(0.8)f& {} & Y,  \\
   {} & {} & Z \ar@{ >->}[ru]_(0.6){i} & {}
}}
\end{equation}
where $(q(R), t_1, t_2) \in \Equiv(\mathbb{C})$ (by Theorem~\ref{CKP}) and $(i, j)$ is the morphism in $\Equiv(\mathbb{C})$ such that $(i, j) \circ (q, \alpha) = (f,g)$. Note that $j$ is induced from the fact that $(i\times i) \circ \langle t_1, t_2 \rangle\circ \alpha$ is the (regular epimorphism, monomorphism) factorisation of $\langle s_1, s_2 \rangle \circ g$, thus it is a monomorphism
$$
\xymatrix@C=30pt@R=25pt{
    R \ar@{>>}[r]^-{\alpha} \ar[d]_-g & q(R) \ar@{ >->}[d]^-{(i\times i)\circ \langle t_1,t_2\rangle} \ar@{ >.>}[ld]_-j \\
    S \ar@{ >->}[r]_-{\langle s_1,s_2\rangle} & Y\times Y.
}
$$
From the fact that $(f,g)$ is the coequaliser of its kernel pair in $\Equiv(\mathbb{C})$ it easily follows that $(i,j)$ is an isomorphism in $\Equiv(\mathbb{C})$.
This implies that $f$ and $g$ are regular epimorphisms in~$\mathbb{C}$.
\end{proof}

\begin{propo} \label{equiGour}~~~$\Equiv(\mathbb{C})$ is a Goursat category whenever $\CC$ is.\end{propo}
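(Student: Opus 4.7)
The plan is to apply Theorem~\ref{CKP}, reducing the Goursat property of $\Equiv(\CC)$ to componentwise statements in $\CC$. First I would verify regularity of $\Equiv(\CC)$: finite limits are computed levelwise, as recalled in the excerpt, and by Lemma~\ref{epiequiv} the regular epimorphisms in $\Equiv(\CC)$ are precisely the morphisms $(f,g)$ for which both $f$ and $g$ are regular epimorphisms in $\CC$. Stability of regular epimorphisms under pullback and existence of coequalisers of kernel pairs then follow immediately from the corresponding properties in the regular category $\CC$, since pullbacks and kernel pairs in $\Equiv(\CC)$ are themselves componentwise. In particular, the (regular epi, mono) factorisation of a morphism $(f,g)$ in $\Equiv(\CC)$ is obtained by taking the factorisations of $f$ and of $g$ separately.

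Next, to verify condition (ii) of Theorem~\ref{CKP} in $\Equiv(\CC)$, I would consider a regular epimorphism $(f,g) : (R,r_1,r_2) \twoheadrightarrow (S,s_1,s_2)$ and an equivalence relation $\mathcal{R}$ in $\Equiv(\CC)$ on $(R,r_1,r_2)$. Since monomorphisms, products, and the notions of reflexivity, symmetry and transitivity in $\Equiv(\CC)$ are all computed componentwise, such an $\mathcal{R}$ amounts to a compatible pair $(\mathcal{R}_R, \mathcal{R}_X)$, where $\mathcal{R}_R$ is an equivalence relation on $R$ in $\CC$ and $\mathcal{R}_X$ is an equivalence relation on $X$ in $\CC$. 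By the first step, the regular image $(f,g)(\mathcal{R})$ in $\Equiv(\CC)$ is computed componentwise as $\bigl(g(\mathcal{R}_R),\, f(\mathcal{R}_X)\bigr)$. Applying Theorem~\ref{CKP} to the regular epimorphisms $g$ and $f$ in $\CC$ shows that $g(\mathcal{R}_R)$ and $f(\mathcal{R}_X)$ are equivalence relations in $\CC$, so $(f,g)(\mathcal{R})$ is an equivalence relation in $\Equiv(\CC)$. Theorem~\ref{CKP} then yields the Goursat property of $\Equiv(\CC)$.

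The main point to justify carefully is that all the relevant constructions --- finite limits, (regular epi, mono) factorisations, and hence the notion of equivalence relation in $\Equiv(\CC)$ --- are indeed computed componentwise; this ultimately rests on Lemma~\ref{epiequiv}. A minor bookkeeping task is to confirm that the structure maps induced on $g(\mathcal{R}_R) \rightrightarrows f(\mathcal{R}_X)$ by the universal property of the regular images are compatible with those of $\mathcal{R}_R \rightrightarrows \mathcal{R}_X$, which follows from functoriality of (regular epi, mono) factorisations in $\CC$ applied to $r_1, r_2$ and $s_1, s_2$. No genuinely hard step is expected: once the componentwise description is in place, the Goursat property of $\Equiv(\CC)$ is a direct consequence of that of $\CC$.
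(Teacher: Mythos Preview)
Your proposal is correct and follows essentially the same route as the paper: both verify regularity of $\Equiv(\CC)$ via Lemma~\ref{epiequiv} and the levelwise nature of limits, and then check the Goursat property by applying Theorem~\ref{CKP} componentwise. The only point the paper makes slightly more explicit is that the middle term of the (regular epi, mono) factorisation of $(f,g)$ actually lies in $\Equiv(\CC)$: this is the construction of diagram~\eqref{factor} in the proof of Lemma~\ref{epiequiv}, where the image of $g$ is identified with the regular image $q(R)$ and hence is an equivalence relation by Theorem~\ref{CKP} applied in $\CC$ --- not merely a consequence of Lemma~\ref{epiequiv} alone, as your final paragraph suggests.
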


\begin{proof} As mentioned above, the category $\Equiv(\mathbb{C})$ is finitely complete because $\mathbb{C}$ is so. Lemma \ref{epiequiv} implies that regular epimorphisms in $\Equiv(\CC)$ are stable under pullbacks since regular epimorphisms are stable in $\mathbb{C}$, and regular epimorphisms in $\Equiv(\mathbb{C})$ are ``levelwise'' regular epimorphisms. The existence of the (regular epimorphism, monomorphism) factorisation of a morphism $(f,g)$ as in \eqref{internalfunctor} in the category $\Equiv(\mathbb{C})$ follows from the construction of diagram \eqref{factor}: the (regular epimorphism, monomorphism) factorisation $f=i\circ q$ of $f$ in $\CC$ gives rise to the (regular epimorphism, monomorphism) factorisation $g=j\circ \alpha$ of $g$ in $\CC$. Thus $(q,\alpha)\circ (i,j)$ is the (regular epimorphism, monomorphism) factorisation of $(f,g)$ in $\Equiv(\CC)$.
To see that $\Equiv(\mathbb{C})$ has the Goursat property one uses Theorem \ref{CKP}: to check that the regular image of an equivalence relation in the category $\Equiv(\mathbb{C})$ is again an equivalence in $\Equiv(\mathbb{C})$ one mainly uses the same (``levelwise'') property in the category $\mathbb C$.
\end{proof}

\section{Connectors and groupoids in Goursat categories}
In this section we prove that connectors are stable under quotients in any Goursat category $\CC$. We then define the category $\Conn(\CC)$ of connectors in $\CC$ whose objects are pairs of equivalence relations equipped with a connector, and prove that $\Conn(\CC)$ is a Goursat category whenever the base category $\CC$ is. We conclude by giving a new characterisation of Goursat categories in terms of properties of groupoids and internal categories.

\begin{defi}
Let $(R,r_1,r_2)$ and $(S, s_1,s_2)$ be two equivalence relations on an object $X$ and $R\times_X S$ the pullback of $r_2$ along $s_1$. A \textbf{connector} \cite{bmaconn} between $R$ and $S$  is an arrow $p: R \times_X S \longrightarrow  X$ in $\mathbb{C}$ such that
\begin{enumerate}
\item $x S p(x,y,z)R z$;
\item $p(x,x,y)= y$;
\item $p(x,y,y)= x$;
\item $p(x,y,p(z,u,v))= p(p(x,y,z),u,v)$,
\end{enumerate}
when each term is defined.
\end{defi}
\begin{remark}
Given two regular epimorphisms $d \colon X \twoheadrightarrow Y$ and $c \colon X \twoheadrightarrow Z$, a connector on the effective equivalence relations $\Eq(d)$ and $\Eq(c)$ is the same thing as an internal \emph{pregroupoid} in the sense of Kock \cite{Kock, Kock2} (see also the introduction of \cite{bma}, for instance, for a comparison between these two related notions and some additional references). In the context of Mal'tsev or Goursat categories connectors are useful to develop a centrality theory of non-effective equivalence relations.
\end{remark}

\begin{ex}
If $\nabla_X$ is the largest equivalence relation on an object $X$, then an associative Mal'tsev operation $$p: X\times X \times X \longrightarrow X$$ is precisely a connector between $\nabla_X$ and $\nabla_X$.
\end{ex}

Connectors provide a way to distinguish groupoids amongst reflexive graphs:
\begin{propC}[\cite{cpp}]\label{Example} Given a reflexive graph $$
      \xymatrix {
    X_1 \ar@<6pt>[r]^{d} \ar@<-6pt>[r]_{c} & X_0 \ar[l]|-{e}}
$$
in a finitely complete category $\mathcal C$
  (i.e. $d \circ e = 1_{X_0} = c\circ e$)  then the connectors between $\Eq(c)$ and $\Eq(d)$ are in bijections with the groupoid structures on this reflexive graph.
\end{propC}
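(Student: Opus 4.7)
The plan is to exhibit explicit, mutually inverse assignments between groupoid structures on the reflexive graph and connectors between $\Eq(c)$ and $\Eq(d)$, and then to check the axioms on both sides. All computations are carried out internally on generalised elements (via the Yoneda embedding) since every required identity is an equation of morphisms.

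For the first direction, given a groupoid structure with composition $g\cdot f$ (defined when $c(f)=d(g)$) and inverse $i$, on a generalised triple $(x,y,z)$ of $\Eq(c) \times_{X_1}\Eq(d)$ (so $c(x)=c(y)$ and $d(y)=d(z)$) I would set
$$
p(x,y,z) := z\cdot i(y)\cdot x.
$$
The condition $c(x)=c(y)$ makes $i(y)\cdot x$ composable, and $d(i(y))=c(y)$ together with $d(y)=d(z)$ makes the outer composition legal. Axiom~(1) follows from the source--target identities for $\cdot$ and $i$; axioms~(2) and~(3) from the unit and inverse laws; and axiom~(4) from associativity of $\cdot$ together with $i(y)\cdot y = e(d(y))$, used implicitly to cancel intermediate terms.

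For the converse direction, given a connector $p$, I would read off the groupoid operations by substituting identities of the reflexive graph:
$$
i(f) := p\bigl(e(c(f)),\, f,\, e(d(f))\bigr), \qquad m(g,f) := p\bigl(f,\, e(c(f)),\, g\bigr),
$$
with $m$ defined on the pullback of $c$ along $d$. The source--target compatibilities are ensured by axiom~(1); the unit laws for $e$ follow from axioms~(2) and~(3); associativity of $m$ is obtained from axiom~(4) by inserting identities in the middle slot and absorbing them via~(2)--(3); and the inverse laws $m(f,i(f))=e(c(f))$ and $m(i(f),f)=e(d(f))$ follow by analogous substitutions.

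Finally, one needs to check that the two assignments are mutually inverse. Starting from a groupoid, constructing $p$, and then re-extracting $(m,i)$ via the formulas above immediately recovers the original operations by the unit and inverse laws. Conversely, starting from a connector $p$, extracting $(m,i)$, and rebuilding $p'(x,y,z) = m(m(z, i(y)), x)$, one must show that $p'=p$; this requires unfolding $p'$ into a nested expression of $p$'s with identity entries in the appropriate slots and then repeatedly applying axiom~(4) so that all those identities are absorbed via~(2) and~(3). The main obstacle is precisely this last bookkeeping step: no new idea is needed beyond the connector axioms, but several applications of associativity with carefully placed identity insertions must be chained. Once this is done, the bijection is established.
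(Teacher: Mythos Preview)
The paper does not supply its own proof of this proposition: it is quoted from \cite{cpp} and used as a black box. So there is nothing to compare against, and your proposal has to be judged on its own merits.

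Your argument is correct and is the standard one. The formulas $p(x,y,z)=z\cdot i(y)\cdot x$ in one direction and $m(g,f)=p(f,e(c(f)),g)$, $i(f)=p(e(c(f)),f,e(d(f)))$ in the other are exactly those of \cite{cpp}, and your sketch of the verifications is accurate. The only place you express hesitation is the final ``round trip'' showing that the connector $p'$ rebuilt from the extracted $(m,i)$ coincides with the original $p$. This does go through with two applications of axiom~(4): first
\[
m(i(y),x)=p\bigl(x,\,e(c(y)),\,p(e(c(y)),y,e(d(y)))\bigr)
         \stackrel{(4)}{=} p\bigl(p(x,e(c(y)),e(c(y))),\,y,\,e(d(y))\bigr)
         \stackrel{(3)}{=} p(x,y,e(d(y))),
\]
and then, since $c(m(i(y),x))=c(i(y))=d(y)$,
\[
m\bigl(z,\,m(i(y),x)\bigr)=p\bigl(p(x,y,e(d(y))),\,e(d(y)),\,z\bigr)
         \stackrel{(4)}{=} p\bigl(x,\,y,\,p(e(d(y)),e(d(y)),z)\bigr)
         \stackrel{(2)}{=} p(x,y,z).
\]
So the bookkeeping you anticipated is short and the bijection is established.
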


It is well known that Goursat categories satisfy the so-called \emph{Shifting Property}~\cite{hg,bm}. In this context connectors are unique when they exist (Theorem 2.13 and Proposition 5.1 in~\cite{bm}):
accordingly, for a given pair of equivalence relations on the same object the fact of having a connector becomes a \emph{property}.

\begin{defi}
Let $R$ and $S$ be two equivalence relations on an object X. A \textbf{double equivalence relation} on $R$ and $S$ is given by an object $C \in \mathbb{C}$ equipped with two equivalence relations $(\pi_1,\pi_2): C \rightrightarrows R$ and $(p_1,p_2): C \rightrightarrows S$ such that the following diagram
\[
      \xymatrix {
   C  \ar@<.5ex>[r]^{\pi_1} \ar@<-.5ex>[r]_{\pi_2} \ar@<.5ex>[d]^{p_2} \ar@<-.5ex>[d]_{p_1}  & R \ar@<.5ex>[d]^{r_2} \ar@<-.5ex>[d]_{r_1} \\ S  \ar@<.5ex>[r]^{s_1} \ar@<-.5ex>[r]_{s_2} & X
}
\]
commutes (in the ``obvious'' way).
\end{defi}

A double equivalence relation $C$ on $R$ and $S$ is called a \textbf{centralizing relation} \cite{cpp} when the square
\[
      \xymatrix {
   C \ar@{}[dr]|(.25){\lrcorner}\ar[r]^{\pi_1} \ar[d]_{p_1} & R \ar[d]^{r_1} \\ S  \ar[r]_{s_1} & X
}
\]
is a pullback. Under this assumption it follows that any of the commutative squares in the definition of a centralizing relation is a pullback.

The following lemma gives the relationship between connectors and centralizing relations.
\begin{lemmaC}[\cite{bmaconn}] \label{concen}
If $\mathbb{C}$ is a category with finite limits and $R$ and $S$ are two equivalence relations on the same object $X$, then the following conditions are equivalent:
\begin{enumerate}
  \item[(i)] there exists a connector between $R$ and $S$;
  \item[(ii)] there exists a centralizing relation on $R$ and $S$.
\end{enumerate}
\end{lemmaC}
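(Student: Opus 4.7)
The proof should be a direct construction in both directions, requiring only finite limits in $\mathbb{C}$; no regularity or Goursat hypothesis intervenes here (these become relevant only once one wants the stability and uniqueness results of Section 3).

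For (i) $\Rightarrow$ (ii), given a connector $p \colon R \times_X S \to X$, the plan is to take $C := R\times_X S$ (the pullback of $r_2$ along $s_1$) as the underlying object and to equip it with four structural arrows. Writing $(x,y,z)$ for a generalised element with $xRy$ and $ySz$, and setting $t := p(x,y,z)$, axiom (1) guarantees $xSt$ and $tRz$, so the assignments
\[
\pi_1(x,y,z)=(x,y),\ \ \pi_2(x,y,z)=(t,z),\ \ p_1(x,y,z)=(x,t),\ \ p_2(x,y,z)=(y,z)
\]
land in $R$ (first two) and in $S$ (last two). Reflexivity of $(\pi_1,\pi_2)$ over $R$ uses axiom (3) via the section $(x,y)\mapsto (x,y,y)$; reflexivity of $(p_1,p_2)$ over $S$ uses axiom (2) via $(y,z)\mapsto (y,y,z)$. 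Transitivity of both reduces to the associativity axiom (4). Symmetry amounts to showing that $p(y,x,t)=z$, which is a formal consequence of axioms (2)--(4). The pullback square witnessing centrality is built from $\pi_1$ and $p_2$, and is just the defining pullback of $R\times_X S$.

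For (ii) $\Rightarrow$ (i), given a centralizing relation $C$ with projections $(\pi_1,\pi_2)$ onto $R$ and $(p_1,p_2)$ onto $S$, the centralizing hypothesis together with the symmetries of the two equivalence relations on $C$ imply that each of the four canonical comparison squares is a pullback. In particular the pair $(\pi_1,p_2)\colon C \to R\times_X S$ is an isomorphism, and I would define $p \colon R\times_X S \to X$ as the composite $s_2 \circ p_1 \circ (\pi_1,p_2)^{-1}$ (equivalently $r_1 \circ \pi_2 \circ (\pi_1,p_2)^{-1}$). The coincidence of the two expressions is axiom (1); the reflexivity sections of $C$ over $R$ and over $S$ deliver axioms (2) and (3); and axiom (4) follows from transitivity of the equivalence relation $(p_1,p_2)$ on $C$ (equivalently, from the fact that a $3$-fold iterated connector square can be assembled in two compatible ways using the pullback structure).

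The main obstacle is purely bookkeeping: translating the element-theoretic description above into clean diagrammatic arguments, and exploiting at each step that in a centralizing double equivalence relation \emph{all four} canonical comparison squares are pullbacks. The most delicate axioms to check are the symmetry identity $p(y,x,p(x,y,z))=z$ in the forward direction and the associativity axiom (4) in the backward direction, but both reduce to formal manipulations once a coherent element-notation is set up.
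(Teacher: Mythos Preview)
The paper does not actually give a proof of this lemma: it is simply quoted from \cite{bmaconn} and the text moves on immediately. So there is no ``paper's own proof'' to compare against here.

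That said, your sketch is essentially the standard argument from \cite{bmaconn} and is correct. A couple of small points worth tightening. First, in the paper's convention the \emph{defining} pullback square of a centralizing relation is the one built from $\pi_1$ and $p_1$ (with $r_1$ and $s_1$), whereas the square you identify as ``just the defining pullback of $R\times_X S$'' is the $(\pi_1,p_2)$ square; you should make explicit the (easy) observation that in a double equivalence relation the symmetry maps transport any one of the four comparison squares being a pullback to all of them, so that your $C$ really is centralizing in the paper's sense. Second, the symmetry identity you isolate, $p(y,x,p(x,y,z))=z$, is indeed a formal consequence of (2)--(4): by (4) it equals $p(p(y,x,x),y,z)$, then (3) and (2) finish it; the companion identity $p(p(x,y,z),z,y)=x$ needed for the other symmetry is obtained the same way. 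With these two clarifications your outline goes through without further difficulty.
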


When $\CC$ is a Mal'tsev category, $R$ and $S$ are equivalence relations on an object $X$ with a connector  and $i:I\rightarrowtail X$ is a monomorphism, then the inverse images $i^{-1}(R)$ and $i^{-1}(S)$ also have a connector~\cite{bmaconn}.  We establish a similar property for Goursat categories, with respect to regular epimorphisms:

\begin{propo}\label{staquo}
Let $\mathbb{C}$ be a Goursat category, $R$ and $S$ two equivalence relations on an object $X$, and let $f: X \twoheadrightarrow Y$ be a regular epimorphism. If there exists a connector between $R$ and $S$, then there exists a connector between the regular images $f(R)$ and $f(S)$.
\end{propo}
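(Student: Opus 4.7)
The plan is to reduce the problem to a statement about centralizing relations via Lemma~\ref{concen}, and then transport a centralizing relation on $R$ and $S$ to one on $f(R)$ and $f(S)$ by exploiting the fact that $\Equiv(\mathbb{C})$ is itself a Goursat category (Proposition~\ref{equiGour}). The final pullback condition needed for centrality will follow from the cube characterisation (Theorem~\ref{newcharact}).

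By Lemma~\ref{concen} the hypothesis yields a centralizing relation $C$ on $R$ and $S$. The pair $(\pi_1,\pi_2)\colon (C,p_1,p_2)\rightrightarrows (R,r_1,r_2)$ is an equivalence relation in $\Equiv(\mathbb{C})$, and the canonical morphism $(f,h_R)\colon (R,r_1,r_2)\twoheadrightarrow (f(R),\bar r_1,\bar r_2)$ (where $h_R\colon R\twoheadrightarrow f(R)$ is the quotient induced by $f$) is a regular epimorphism in $\Equiv(\mathbb{C})$ by Lemma~\ref{epiequiv}. Since $\Equiv(\mathbb{C})$ is Goursat, Theorem~\ref{CKP} applied inside $\Equiv(\mathbb{C})$ shows that the regular image of this equivalence relation along $(f,h_R)$ is again an equivalence relation in $\Equiv(\mathbb{C})$. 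Computing the image levelwise, as in the proof of Proposition~\ref{equiGour}, its underlying object becomes a double equivalence relation $\tilde C$ on $f(R)$ and $f(S)$, sitting over a canonical regular epimorphism $C\twoheadrightarrow \tilde C$ compatible with all structure maps; in particular the lower component of the image morphism is $h_S\colon S\twoheadrightarrow f(S)$.

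It then remains to show that $\tilde C$ is in fact \emph{centralizing}, i.e.\ that the commutative square formed by $\tilde\pi_1,\tilde p_1,\bar r_1,\bar s_1$ is a pullback. For this I form the cube whose back face is the centralizing pullback for $C$ (with maps $\pi_1,p_1,r_1,s_1$), whose front face is the candidate square for $\tilde C$, and whose horizontal arrows are the four regular epimorphisms $C\twoheadrightarrow \tilde C$, $h_R$, $h_S$ and $f$. All vertical arrows are split epimorphisms (by reflexivity of the equivalence relations involved, noting that regular images of reflexive relations are again reflexive), and the splittings are preserved by the horizontal maps thanks to the naturality of the regular-image construction; this is the main verification step, but is essentially routine once the levelwise structure of $\tilde C$ from Proposition~\ref{equiGour} is in hand. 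Theorem~\ref{newcharact} then forces the front face to be a pullback, showing that $\tilde C$ is a centralizing relation on $f(R)$ and $f(S)$. A final invocation of Lemma~\ref{concen} produces the desired connector between $f(R)$ and $f(S)$.
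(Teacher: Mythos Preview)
Your proof is correct and follows essentially the same strategy as the paper: pass to a centralizing relation via Lemma~\ref{concen}, push it forward along the regular epimorphisms to obtain a double equivalence relation on $f(R)$ and $f(S)$, and then use the cube characterisation of Theorem~\ref{newcharact} to recover the pullback making it centralizing.

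The only noteworthy difference is in how you obtain the double equivalence relation structure on $\tilde C$. The paper works entirely inside $\mathbb{C}$: it defines $f_R(C)$ as the regular image of $(C,\pi_1,\pi_2)$ along $f_R$, constructs the maps $\alpha_1,\alpha_2\colon f_R(C)\to f(S)$ by hand via the (regular epi, mono) diagonal, and then checks that $\langle\alpha_1,\alpha_2\rangle$ is precisely the regular image of $(C,p_1,p_2)$ along $f_S$, so that two separate applications of Theorem~\ref{CKP} in $\mathbb{C}$ give both equivalence relation structures. You instead package this step by viewing $C$ as an equivalence relation in $\Equiv(\mathbb{C})$ on the object $(R,r_1,r_2)$ and invoking Proposition~\ref{equiGour} together with Theorem~\ref{CKP} once inside $\Equiv(\mathbb{C})$; the levelwise computation of images in $\Equiv(\mathbb{C})$ then recovers exactly the same object. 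This is a clean conceptual shortcut rather than a genuinely different argument.
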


\begin{proof} Suppose that there exists a connector between $R$ and $S$. This implies that there exists a centralizing relation $(C,(\pi_1,\pi_2),(p_1,p_2))$ on $R$ and $S$. Consider the regular image $(f(R), a, b)$ and $(f(S),c,d)$ of $R$ and $S$ along $f$. We obtain the following diagram
\begin{equation}
\label{cube}
\vcenter{\xymatrix {
     C \ar@{->>}[rr]^{\alpha} \ar@<.5ex>[rd]^{\pi_2} \ar@<-.5ex>[rd]_{\pi_1} \ar@<.5ex>[dd]^{p_2} \ar@<-.5ex>[dd]_{p_1}& {}  & f_R(C) \ar@<.5ex>@{-->}[dd]^(0.7){\alpha_2} \ar@<-.5ex>@{-->}[dd]_(0.7){\alpha_1} \ar@<.5ex>[rd]^{\beta_2} \ar@<-.5ex>[rd]_{\beta_1} & {} \\
     {} & R \ar@<.5ex>[dd]^(0.3){r_2} \ar@<-.5ex>[dd]_(0.3){r_1}  \ar@{->>}[rr]^(0.3){f_R} & {} & f(R) \ar@<.5ex>[dd]^{b} \ar@<-.5ex>[dd]_{a}\\
     S \ar@{-->>}[rr]^(0.7){f_S} \ar@<.5ex>[rd]^{s_2} \ar@<-.5ex>[rd]_{s_1}& {} & f(S) \ar@<.5ex>@{-->}[rd]^{d} \ar@<-.5ex>@{-->}[rd]_{c} & {} \\
     {} & X \ar@{->>}[rr]_{f}   & {} &  Y,
}}
\end{equation}
where $(f_R(C), \beta_1,\beta_2)$ is the regular image of the equivalence relation $(C,\pi_1,\pi_2)$ along the regular epimorphism $f_R$. The fact that the square
\[
      \xymatrix {
   {C\, } \ar@{->>}[r]^{\alpha} \ar[d]_{f_Sp_1} & f_R(C) \ar[d]^{\langle a\beta_1,a\beta_2\rangle} \ar@{.>}[ld]_-{\alpha_1}
   \\ {f(S)\,} \ar@{ >->}[r]_-{\langle c,d\rangle} & Y \times Y
}
\]
 commutes, $\alpha$ is a strong epimorphism and $\langle c,d\rangle$ is a
 monomorphism, implies the existence of an arrow $\alpha_1:
 f_R(C)\longrightarrow f(S)$ making the above diagram commute. Similarly, from
 the commutativity of the third diagram
\[
      \xymatrix {
   {C\, } \ar@{->>}[r]^{\alpha} \ar[d]_{f_Sp_2} & f_R(C) \ar[d]^{\langle b\beta_1,b\beta_2\rangle} \ar@{.>}[ld]_-{\alpha_2} \\
   {f(S) \,} \ar@{ >->}[r]_-{\langle c,d\rangle} & Y \times Y
}
\]
we obtain an arrow $\alpha_2: f_R(C)\longrightarrow f(S)$.

The relations $(f_R(C), \beta_1,\beta_2)$, $(f(R), a, b)$ and $(f(S),c,d)$ are all equivalence relations by Theorem~\ref{CKP}. It is then easy to check that the relation $(f_R(C),\alpha_1,\alpha_2)$ is an equivalence relation on $f(S)$. In fact, the morphism $\langle \alpha_1,\alpha_2\rangle \colon f_R(C)\to f(S)\times f(S)$ is a monomorphism since $\langle c\times c, d\times d\rangle \circ \langle \alpha_1,\alpha_2 \rangle= \langle a,b\rangle \times \langle a,b\rangle \circ \langle \beta_1, \beta_2\rangle$. So, $\langle \alpha_1,\alpha_2\rangle$ is the regular image of $\langle p_1,p_2\rangle$ along $f_S$, thus it is an equivalence relation on $f(S)$ by Theorem~\ref{CKP}.

By assumption all the left squares of \eqref{cube} are pullbacks, so it follows that all the right squares of \eqref{cube} are pullbacks as well by Theorem \ref{newcharact} (ii). Then $(f_R(C), (\alpha_1,\alpha_2),(\beta_1,\beta_2))$ is a centralizing relation on $f(R)$ and $f(S)$. By Lemma \ref{concen} there is a connector between  $f(R)$ and $f(S)$.
\end{proof}
\vspace{1cm}

We are now going to show that the category whose objects are pairs of equivalence relations equipped with a connector is a Goursat category whenever the base category is a Goursat category. For this, let us first fix some notation: if $\mathbb{C}$ is a finitely complete category, we write 2-$\Eq(\mathbb{C})$ for the category whose objects $(R,S,X)$ are pairs of equivalence relations $R$ and $S$ on the same object $X$
\[
      \xymatrix {
    R \ar@<.5ex>[r]^{r_1} \ar@<-.5ex>[r]_{r_2} & X & S \ar@<.5ex>[l]^{s_2} \ar@<-.5ex>[l]_{s_1}
}
\]
and arrows are triples $(f_R,f_S,f)$ making the following diagram commute:
\begin{equation}
\label{arrows in Eq(C)}
    \vcenter{\xymatrix {
    R \ar@<.5ex>[r]^{r_1} \ar@<-.5ex>[r]_{r_2} \ar[d]_{f_R} & X  \ar[d]_f & S \ar@<.5ex>[l]^{s_2} \ar@<-.5ex>[l]_{s_1} \ar[d]^{f_S} \\ \bar{R} \ar@<.5ex>[r]^{\bar{r_1}} \ar@<-.5ex>[r]_{\bar{r_2}} & \bar{X} & \bar{S}. \ar@<.5ex>[l]^{\bar{s_2}} \ar@<-.5ex>[l]_{\bar{s_1}}
}}
\end{equation}
We write $\Conn(\mathbb{C})$ for the category whose objects $(R,S,X,p)$ are pairs of equivalence relations $R$ and $S$ on an object $X$ with a given connector $p: R \times_X S \rightarrow X$; arrows in $\Conn(\mathbb{C})$ are arrows in 2-$\Eq(\mathbb{C})$ respecting the connectors. This means that, given a diagram \eqref{arrows in Eq(C)} where both $(R,S,X)$ and $(\bar{R},\bar{S},\bar{X})$ are in $\Conn(\mathbb{C})$, with $p: R \times_X S \rightarrow X$ and $\bar{p}: \bar{R} \times_Y \bar{S} \rightarrow Y$ the corresponding connectors,  then the diagram
\[
      \xymatrix {
     R \times_X S \ar[r]^{\bar{f}} \ar[d]_{p}   & \bar{R} \times_{\bar{X}} \bar{S} \ar[d]^{\bar{p}} \\ X  \ar[r]_{f} & \bar{X}
}
\]
commutes, where $\bar{f}$ is the natural map induced by the universal property of the pullback $\bar{R} \times_{\bar{X}} \bar{S}$.

We say that a subcategory $\mathbb{P}$ is \emph{closed under (regular) quotients} in a category $\mathbb{Q}$ if, for any regular epimorphism $f: A \twoheadrightarrow B$ in $\mathbb{Q}$ such that $A \in \mathbb{P}$, then $B \in \mathbb{P}$.

\begin{propo}\label{stabilityepi}
\label{connclo}
If $\mathbb{C}$ is a Goursat category, then $\Conn(\mathbb{C})$ is a full subcategory of  2-$\Eq(\mathbb{C})$, that is closed in 2-$\Eq(\mathbb{C})$ under quotients.
\end{propo}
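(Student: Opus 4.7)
The statement has two parts: fullness of $\Conn(\mathbb{C})$ as a subcategory of 2-$\Eq(\mathbb{C})$, and closure under regular quotients. I would tackle the quotient closure first, since it follows directly from Proposition~\ref{staquo}, and then derive fullness from it by invoking the uniqueness of connectors.

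The preparatory observation is that regular epimorphisms in 2-$\Eq(\mathbb{C})$ are exactly the componentwise regular epimorphisms, in direct analogy with Lemma~\ref{epiequiv}: the easy direction identifies such a morphism with the coequaliser of its componentwise kernel pair, while the converse uses the (regular epimorphism, monomorphism) factorisation of each component in $\mathbb{C}$ and Theorem~\ref{CKP} to lift a factorisation to 2-$\Eq(\mathbb{C})$, forcing the monomorphism part to be invertible. Granted this, if $(f_R, f_S, f)\colon (R, S, X, p) \twoheadrightarrow (\bar{R}, \bar{S}, \bar{X})$ is a regular epimorphism in 2-$\Eq(\mathbb{C})$ with $(R, S, X, p) \in \Conn(\mathbb{C})$, then $f$, $f_R$, $f_S$ are all regular epimorphisms in $\mathbb{C}$, which forces $\bar{R} = f(R)$ and $\bar{S} = f(S)$ as subobjects of $\bar{X} \times \bar{X}$. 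Proposition~\ref{staquo} then supplies the required connector on $(\bar{R}, \bar{S}, \bar{X})$.

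For fullness, consider a morphism $(f_R, f_S, f)\colon (R, S, X, p) \to (\bar{R}, \bar{S}, \bar{X}, \bar{p})$ in 2-$\Eq(\mathbb{C})$ between objects of $\Conn(\mathbb{C})$. Factor $f = i \circ q$ in $\mathbb{C}$ as a regular epimorphism $q$ followed by a monomorphism $i$, and lift this to a factorisation $(q_R, q_S, q)$ followed by $(i_R, i_S, i)$ in 2-$\Eq(\mathbb{C})$. The quotient closure applied to $(q_R, q_S, q)$ produces a connector $\tilde{p}$ on $(q(R), q(S), q(X))$ which, by the explicit construction in the proof of Proposition~\ref{staquo}, satisfies $q \circ p = \tilde{p} \circ \tilde{q}$, where $\tilde{q}$ is the induced morphism between the relevant pullbacks. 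The monomorphism $(i_R, i_S, i)$ identifies $(q(R), q(S), q(X))$ with a sub-object of $(\bar{R}, \bar{S}, \bar{X})$, along which $\bar{p}$ restricts to a connector on $(q(R), q(S), q(X))$; by the uniqueness of connectors in a Goursat category (guaranteed by the Shifting Property, as recalled in the paragraph following Proposition~\ref{Example}), this restriction must coincide with $\tilde{p}$. Chaining the two identities gives $f \circ p = \bar{p} \circ \bar{f}$, which is precisely the connector-preservation condition.

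The main obstacle is the monomorphism step inside the fullness argument: one must verify that $\bar{p}$ genuinely restricts to an operation on the pullback $q(R) \times_{q(X)} q(S)$ whose values land in $q(X) \subseteq \bar{X}$, which is the Goursat analogue of the Mal'tsev fact about inverse images of connectors under monomorphisms recalled just before Proposition~\ref{staquo}. Once this restriction is available, uniqueness forces the restricted operation to equal $\tilde{p}$, and the remaining verifications reduce to formal chases using Proposition~\ref{staquo} and Theorem~\ref{CKP}.
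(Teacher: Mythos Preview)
Your treatment of closure under quotients is correct and essentially identical to the paper's: regular epimorphisms in 2-$\Eq(\mathbb{C})$ are componentwise, so $\bar{R}=f(R)$ and $\bar{S}=f(S)$, and Proposition~\ref{staquo} finishes.

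The gap is in your fullness argument. You yourself flag the monomorphism step as ``the main obstacle'' and do not resolve it: you need $\bar{p}$ restricted to $q(R)\times_{q(X)} q(S)$ to take values in the subobject $q(X)\hookrightarrow \bar{X}$. The connector axioms only give $\bar{p}(x,y,z)\,\bar{S}\,x$ and $\bar{p}(x,y,z)\,\bar{R}\,z$ in $\bar{X}$, which does not force $\bar{p}(x,y,z)$ into $q(X)$; and even the Mal'tsev statement recalled before Proposition~\ref{staquo} is about inverse images $i^{-1}(\bar{R})$, $i^{-1}(\bar{S})$, not about the possibly smaller $q(R)$, $q(S)$. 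The paper explicitly frames the mono/epi behaviour of connectors as a Mal'tsev/Goursat duality (see the closing Remark), so invoking a ``Goursat analogue'' of the mono-restriction property is precisely what one cannot take for granted here.

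The paper sidesteps this entirely: fullness is obtained by citing Corollary~5.2 of \cite{bm}, which shows directly that in any category satisfying the Shifting Property (hence in any Goursat category) a morphism in 2-$\Eq(\mathbb{C})$ between pairs admitting connectors automatically commutes with the connectors. No factorisation of $f$ is needed. If you want a self-contained argument, that corollary is what you should reproduce, rather than attempting the epi--mono reduction.
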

\begin{proof}
The fullness of the forgetful functor $\Conn(\mathbb{C}) \rightarrow$ 2-$\Eq(\mathbb{C})$ follows from Corollary 5.2 in~\cite{bm}, by taking into account the fact that any Goursat category satisfies the Shifting Property.

Let us then consider a regular epimorphism in 2-$\Eq(\mathbb{C})$
\[
      \xymatrix {
    R \ar@<.5ex>[r]^{r_1} \ar@<-.5ex>[r]_{r_2} \ar@{>>}[d]_{f_R} & X  \ar@{>>}[d]_f & S \ar@<.5ex>[l]^{s_2} \ar@<-.5ex>[l]_{s_1} \ar@{>>}[d]^{f_S} \\ \bar{R} \ar@<.5ex>[r]^{\bar{r_1}} \ar@<-.5ex>[r]_{\bar{r_2}} & \bar{X} & \bar{S} \ar@<.5ex>[l]^{\bar{s_2}} \ar@<-.5ex>[l]_{\bar{s_1}}
}
\]
 (this means that $f$, $f_R$ and $f_S$ are regular epimorphisms in $\mathbb{C}$) such that its domain $(R,S,X)$ belongs to $\Conn(\mathbb{C})$. The equalities $f(R)= \bar{R}$ and $f(S)= \bar{S}$, together with Proposition \ref{staquo}, imply that there exists a connector between $\bar{R}$ and  $\bar{S}$.
\end{proof}

\begin{lemma}\label{fullsub}
Let  $\mathbb{D}$ be a finitely complete category, and $\mathbb{C}$ a full subcategory of  $\mathbb{D}$ closed in $\mathbb{D}$ under finite limits and quotients. Then:
\begin{enumerate}
\item $\mathbb{C}$ is regular whenever $\mathbb{D}$ is regular.
\item $\mathbb{D}$ is a Goursat category whenever $\mathbb{C}$ is a Goursat category.
\end{enumerate}
\end{lemma}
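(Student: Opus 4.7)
My plan is to verify both parts by using the characterisations of Goursat categories already established in the paper and transferring data across the inclusion $\mathbb{C}\hookrightarrow\mathbb{D}$ via the two closure hypotheses.

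Part (1) is essentially bookkeeping. Since $\mathbb{C}$ is closed under finite limits in $\mathbb{D}$ and the inclusion is full, $\mathbb{C}$ has finite limits computed as in $\mathbb{D}$, in particular kernel pairs exist in $\mathbb{C}$. For any $\mathbb{C}$-morphism $f\colon A\to B$, the (regular epimorphism, monomorphism)-factorisation of $f$ in $\mathbb{D}$ has as image a quotient of the $\mathbb{C}$-object $A$, hence lies in $\mathbb{C}$ by the quotient-closure hypothesis; by fullness this factorisation is already a $\mathbb{C}$-factorisation. Pullback-stability of regular epimorphisms transfers directly from $\mathbb{D}$ to $\mathbb{C}$ because pullbacks agree, and the coequaliser in $\mathbb{D}$ of a $\mathbb{C}$-kernel pair lies in $\mathbb{C}$ as a quotient, so $\mathbb{C}$ has coequalisers of kernel pairs.

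For part (2) I would use Theorem \ref{CKP}: to show $\mathbb{D}$ is Goursat it suffices to verify that for every regular epimorphism $f\colon X\twoheadrightarrow Y$ in $\mathbb{D}$ and every equivalence relation $R$ on $X$, the regular image $f(R)=fRf^o$ is an equivalence relation on $Y$. When $X\in\mathbb{C}$ this check is immediate: $R$ (a subobject of the $\mathbb{C}$-product $X\times X$), $Y$ (a quotient of $X$), and $f(R)$ (a quotient of $R$) all lie in $\mathbb{C}$ by the closure hypotheses, and the Goursat property of $\mathbb{C}$, in the form of Theorem \ref{CKP} applied internally to $\mathbb{C}$, yields the conclusion.

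The step I expect to be the main obstacle is closing the argument for a $\mathbb{D}$-object $X$ that does not lie in $\mathbb{C}$: the stated closure hypotheses supply no canonical mechanism for associating an arbitrary $\mathbb{D}$-object with an object of $\mathbb{C}$, and so the Goursat property of $\mathbb{C}$ does not reach such $X$ by formal means alone. Any completion of the argument must therefore supplement the hypotheses --- for instance by asserting that every $\mathbb{D}$-object is a regular quotient, or a finite limit, of $\mathbb{C}$-objects, a condition which is automatically satisfied in the intended application of the lemma (with $\mathbb{D}$ equal to 2-$\Eq(\mathbb{C})$ and the full subcategory being $\Conn(\mathbb{C})$, where every $\mathbb{D}$-object is manifestly built levelwise from $\mathbb{C}$-data) --- or else exploit the concrete levelwise description of $\mathbb{D}$ to reduce the Goursat check for arbitrary $\mathbb{D}$-objects to a Goursat check inside $\mathbb{C}$ component by component.
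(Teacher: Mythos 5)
Your part (1) is correct and is essentially the paper's own argument: finite limits in $\mathbb{C}$ are computed as in $\mathbb{D}$, the (regular epimorphism, monomorphism) factorisation in $\mathbb{D}$ of a $\mathbb{C}$-morphism has image a quotient of its domain and hence lies in $\mathbb{C}$, and pullback-stability of regular epimorphisms then transfers directly.

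For part (2), the obstacle you identify is real, and in fact the statement as printed is false: take $\mathbb{D}=\mathbf{Set}$ and $\mathbb{C}$ the full subcategory on the terminal object; this $\mathbb{C}$ is closed under finite limits and quotients and is trivially Goursat, while $\mathbf{Set}$ is not. The resolution, however, is not to strengthen the hypotheses as you propose, but to notice that the roles of $\mathbb{C}$ and $\mathbb{D}$ have been interchanged in item (2). The implication that is actually needed --- in Theorem \ref{conngour}, to pass from ``2-$\Eq(\mathbb{C})$ is Goursat'' to ``$\Conn(\mathbb{C})$ is Goursat'', and again in the Remark following Theorem \ref{caract}, to pass from $\RG(\mathbb{C})$ to $\Grpd(\mathbb{C})$ --- is that the full subcategory $\mathbb{C}$ is Goursat whenever the ambient category $\mathbb{D}$ is. That direction is immediate from part (1): given equivalence relations $R$ and $S$ on an object $X$ of $\mathbb{C}$, the relational composites $RSR$ and $SRS$ are built from pullbacks and (regular epimorphism, monomorphism) factorisations, both of which are computed in $\mathbb{C}$ exactly as in $\mathbb{D}$; hence the identity $RSR=SRS$, valid in $\mathbb{D}$, already holds in $\mathbb{C}$, which is regular by part (1). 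So you correctly diagnosed that the literal claim is unprovable from the stated hypotheses, but the missing step is to recognise the typo and prove the converse implication by this ``relation composition is computed the same way'' argument, which is precisely what the paper's one-line proof does.
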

\begin{proof} The (regular epimorphism, monomorphism) factorisation in  $\mathbb{D}$ of an arrow in  $\mathbb{C}$ is also its factorisation in  $\mathbb{C}$, since $\mathbb{C}$ is closed in $\mathbb{D}$ under quotients. Since finite limits in $\mathbb{C}$ are calculated as in $\mathbb{D}$, it follows that regular epimorphisms are stable under pullbacks. Now the second statement easily follows from the fact that the composition of relations is computed in the same way in $\mathbb{C}$ and in $\mathbb{D}$. \end{proof}

\begin{theorem}
\label{conngour}
 If $\mathbb{C}$ is a Goursat category then $\Conn(\mathbb{C})$ is a Goursat category.
 \end{theorem}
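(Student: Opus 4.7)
The plan is to apply Lemma~\ref{fullsub} to the inclusion of $\Conn(\CC)$ into 2-$\Eq(\CC)$, which reduces the theorem to two tasks: (a) showing that 2-$\Eq(\CC)$ is itself a Goursat category, and (b) showing that $\Conn(\CC)$ is a full subcategory of 2-$\Eq(\CC)$ that is closed under finite limits and quotients.

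For (a), I would re-run the argument of Proposition~\ref{equiGour} essentially verbatim, now in three slots rather than two. Finite limits in 2-$\Eq(\CC)$ are computed levelwise (on $R$, on $S$, and on the shared carrier $X$); the obvious analogue of Lemma~\ref{epiequiv} shows that $(f_R,f_S,f)$ is a regular epimorphism in 2-$\Eq(\CC)$ if and only if each of $f_R$, $f_S$, $f$ is a regular epimorphism in $\CC$; and the (regular epimorphism, monomorphism) factorisation is constructed levelwise from the corresponding factorisation in $\CC$, with Theorem~\ref{CKP} guaranteeing that the regular images remain equivalence relations. Pullback-stability of regular epimorphisms and the Goursat characterisation of Theorem~\ref{CKP} then transfer levelwise from $\CC$ to 2-$\Eq(\CC)$.

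For (b), fullness of the forgetful functor $\Conn(\CC)\to$ 2-$\Eq(\CC)$ is exactly Proposition~\ref{connclo} (which in turn relies on uniqueness of connectors via the Shifting Property). Closure under quotients is precisely Proposition~\ref{stabilityepi}, the one step where the Goursat hypothesis on $\CC$ is genuinely used. Closure under finite limits follows from the commutation of limits with limits: given a finite diagram $(R_i,S_i,X_i,p_i)$ in $\Conn(\CC)$, the canonical comparison
\[
(\lim R_i)\times_{\lim X_i}(\lim S_i)\;\longrightarrow\;\lim(R_i\times_{X_i}S_i)
\]
is an isomorphism, so the $p_i$ assemble into a map $(\lim R_i)\times_{\lim X_i}(\lim S_i)\to\lim X_i$; the four defining identities of a connector pass to the limit by the universal property of $\lim X_i$.

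With (a) and (b) in hand, Lemma~\ref{fullsub} immediately delivers that $\Conn(\CC)$ is a Goursat category. I expect the only real content to lie in Proposition~\ref{stabilityepi} (already established); the remaining verifications are routine bookkeeping showing that all the relevant categorical structure on 2-$\Eq(\CC)$ is inherited levelwise from $\CC$. The main (but very manageable) obstacle in the write-up will be stating the three-slot variant of Proposition~\ref{equiGour} cleanly without repeating the proof in full.
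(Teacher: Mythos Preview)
Your proposal is correct and follows essentially the same route as the paper: establish that 2-$\Eq(\CC)$ is Goursat by the levelwise analogue of Proposition~\ref{equiGour}, then invoke Proposition~\ref{stabilityepi} and Lemma~\ref{fullsub}. The only difference is that you explicitly verify closure of $\Conn(\CC)$ under finite limits (a hypothesis of Lemma~\ref{fullsub} that the paper leaves implicit), which is a welcome bit of extra care rather than a departure in strategy.
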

\begin{proof}
  Using similar arguments as those given in the proof of Proposition \ref{equiGour} with respect to $\Equiv(\CC)$, one may deduce that 2-$\Eq( \mathbb{C})$ is a Goursat category. The result then follows from Proposition \ref{stabilityepi} and Lemma \ref{fullsub}.
\end{proof}

We finally prove that internal categories and groupoids can be used to characterise Goursat categories. Recall that an \textbf{internal category} in a category $\mathbb{C}$ with pullbacks is a reflexive graph with a multiplication $m \colon X_1 \times_{X_0}X_1 \rightarrow X_1$
$$
      \xymatrix {
    X_1 \times_{X_0}X_1   \ar@<6pt>[r]^-{p_1} \ar@<-6pt>[r]_-{p_2} \ar[r]|-{m}    & X_1 \ar@<6pt>[r]^{d} \ar@<-6pt>[r]_{c} & X_0, \ar[l]|-{e}
}
$$
(where $X_1 \times_{X_0}X_1$ is the pullback of $d$ and $c$) such that:
\begin{itemize}
\item $d\circ m=d \circ p_2$,~~ $c \circ m=c\circ p_1$,~~ $m \circ \langle e\circ d, 1_{X_1}\rangle = 1_{X_1}= m \circ \langle 1_{X_1},e\circ c\rangle$;
\item $ m\circ (1\times m) = m\circ (m \times 1)$.
\end{itemize}
An internal category
$$
      \xymatrix {
    X_1 \times_{X_0}X_1   \ar@<6pt>[r]^-{p_1} \ar@<-6pt>[r]_-{p_2} \ar[r]|-{m}    & X_1  \ar@<6pt>[r]^{d} \ar@<-6pt>[r]_{c} & X_0, \ar[l]|-{e}
}
$$
is a \textbf{groupoid} when there is an additional morphism $ i \colon  X_1 \longrightarrow X_1  $
satisfying the axioms:
\begin{itemize}
\item $d\circ i = c$, ~~$c\circ i=d$;
\item $m\circ \langle i, 1_{X_1}\rangle= e \circ c$ ~~and ~~$m\circ \langle 1_{X_1} , i\rangle= e\circ d $.
\end{itemize}
We write $\Cat(\mathbb{C})$ for the category of internal categories in  $\mathbb{C}$ (and internal functors as morphisms),  $\Grpd(\mathbb{C})$ for the category of groupoids in $\mathbb{C}$,  and $\RG(\mathbb{C})$ for the category of reflexive graphs in $\mathbb{C}$ (with obvious morphisms).

An equivalence relation is a special kind of groupoid, where its domain and codomain morphisms are jointly monomorphic; also any reflexive and transitive relation is in particular an internal category. If $\CC$ is a Goursat category, then any reflexive and transitive relation is an equivalence relation or, equivalently, any internal category is a groupoid (Theorem 1 in~\cite{ndt}). Then Theorem ~\ref{CKP}, which could equivalently be stated through the property that $\Equiv(\CC)$ (or the category of reflexive and transitive relations in $\CC$) is closed in the category of reflexive relations in $\CC$ under quotients, has an \emph{extended} counterpart given below. This characterisation leads to the observation that the structural aspects of Goursat categories mainly concern groupoids (rather than equivalence relations).

\begin{theorem}
\label{caract}
Let $\mathbb{C}$ be a regular category. Then the following conditions are equivalent:
\begin{enumerate}
  \item[(i)] $\mathbb{C}$ is a Goursat category;
  \item[(ii)] $\Grpd(\mathbb{C})$ is closed in $\RG(\mathbb{C})$ under quotients;
  \item[(iii)] $\Cat(\mathbb{C})$ is closed in $\RG(\mathbb{C})$ under quotients.
\end{enumerate}
\end{theorem}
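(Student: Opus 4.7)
The plan is to prove the cycle via (i)$\Rightarrow$(ii)$\Rightarrow$(i) and (i)$\Rightarrow$(iii)$\Rightarrow$(i), reducing (i)$\Rightarrow$(iii) to (i)$\Rightarrow$(ii) via the already recalled result from~\cite{ndt} that in a Goursat category every internal category is a groupoid.

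For (i)$\Rightarrow$(ii), I would exploit Proposition~\ref{Example}, which translates a groupoid structure on a reflexive graph $X_1 \rightrightarrows X_0$ into a connector between $\Eq(d_X)$ and $\Eq(c_X)$. Given a regular epimorphism $(f_1,f_0)\colon X_\bullet \twoheadrightarrow Y_\bullet$ in $\RG(\mathbb{C})$ whose domain is a groupoid, Proposition~\ref{staquo} applied to $f_1\colon X_1\twoheadrightarrow Y_1$ yields a connector between the regular images $f_1(\Eq(d_X))$ and $f_1(\Eq(c_X))$. The crucial step is to identify these regular images with $\Eq(d_Y)$ and $\Eq(c_Y)$. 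Since the identity-assigning morphisms $e_X$ and $e_Y$ split the domain maps $d_X$ and $d_Y$ respectively, and $f_0$, $f_1$ are regular epimorphisms, the commutative square involving $f_1$, $f_0$, $d_X$ and $d_Y$ is a Goursat pushout by Theorem~\ref{Goursatpushout}; consequently, the induced morphism $\Eq(d_X)\twoheadrightarrow\Eq(d_Y)$ is a regular epimorphism, forcing $f_1(\Eq(d_X))=\Eq(d_Y)$. The same argument works for $c$, and then applying Proposition~\ref{Example} in the reverse direction equips $Y_\bullet$ with a groupoid structure.

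For the converse implications (ii)$\Rightarrow$(i) and (iii)$\Rightarrow$(i), let $R$ be an equivalence relation on an object $X$ and $f\colon X\twoheadrightarrow Y$ a regular epimorphism. The regular image $f(R)$ is a reflexive symmetric relation on $Y$, and the image factorisation produces a regular epimorphism of reflexive graphs from $R\rightrightarrows X$ (viewed as a groupoid, hence also as an internal category) onto $f(R)\rightrightarrows Y$. By hypothesis (ii) or (iii), the codomain $f(R)\rightrightarrows Y$ inherits the structure of a groupoid or of an internal category; since its legs $\langle a,b\rangle\colon f(R)\rightarrowtail Y\times Y$ are jointly monic, the multiplication coming from that structure forces $f(R)$ to be transitive. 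Being already reflexive and symmetric, $f(R)$ is then an equivalence relation, and Theorem~\ref{CKP} concludes that $\mathbb{C}$ is a Goursat category.

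Finally, (i)$\Rightarrow$(iii) reduces immediately to (i)$\Rightarrow$(ii): if $X_\bullet$ is an internal category in a Goursat $\mathbb{C}$, the cited result from~\cite{ndt} already forces $X_\bullet$ to be a groupoid, so its quotient in $\RG(\mathbb{C})$ is a groupoid by (ii) and \emph{a fortiori} an internal category. The principal technical obstacle is the identification $f_1(\Eq(d_X))=\Eq(d_Y)$ in the step (i)$\Rightarrow$(ii), which is exactly what the Goursat pushout characterisation of Theorem~\ref{Goursatpushout} delivers; everything else follows from combining it with Proposition~\ref{staquo} (stability of connectors under quotients) and Proposition~\ref{Example} (the connector/groupoid correspondence).
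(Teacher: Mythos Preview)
Your proposal is correct and follows essentially the same approach as the paper: for (i)$\Rightarrow$(ii) you use the connector/groupoid correspondence (Proposition~\ref{Example}), the Goursat pushout characterisation (Theorem~\ref{Goursatpushout}) to identify $f_1(\Eq(d_X))$ with $\Eq(d_Y)$, and stability of connectors under quotients (Proposition~\ref{staquo}); for the converse directions you use Theorem~\ref{CKP} via the regular image of an equivalence relation; and you reduce (i)$\Rightarrow$(iii) to (i)$\Rightarrow$(ii) via~\cite{ndt}. This matches the paper's proof step by step, with your write-up supplying slightly more detail in the converse implications.
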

\begin{proof}
\leavevmode
\begin{description}[beginpenalty=99,before={\renewcommand\makelabel[1]{##1}}]
  \item[(i) $\Rightarrow$ (ii)] Let
\[
      \xymatrix {
     X_1\ar@{>>}[r]^{g} \ar@<6pt>[d]^{c} \ar@<-6pt>[d]_{d}
     & {X'_1} \ar@<6pt>[d]^{c'} \ar@<-6pt>[d]_{d'} \\ X_0 \ar[u]|-{e} \ar@{>>}[r]_{f} & X'_0 \ar[u]|-{e'}
}
\]
be a regular epimorphism $(f,g)$ in $\RG(\mathbb{C})$ (which means that $f$ and $g$ are regular epimorphisms in $\CC$), with
$$
      \xymatrix {
    X_1 \ar@<6pt>[r]^{d} \ar@<-6pt>[r]_{c} & X_0 \ar[l]|-{e}
}
$$
a groupoid in $\mathbb{C}$. By Proposition~\ref{Example}, there exists a connector between $\Eq(d)$ and $\Eq(c)$. Let $\Eq(d)$, $\Eq(c)$, $\Eq(d')$ and $\Eq(c')$ be the kernel pairs of the arrows $d$, $c$, $d'$ and $c'$, respectively. Let  $\lambda : \Eq(d)\rightarrow \Eq(d')$ and $\beta : \Eq(c)\rightarrow \Eq(c')$ be the arrows induced by the universal property of kernel pairs $\Eq(d')$ and $\Eq(c')$, respectively. By Theorem \ref{Goursatpushout}, $\lambda$ and $\beta$ are regular epimorphisms, so that $g(\Eq(d)) = \Eq(d')$ and $g(\Eq(c)) = \Eq(c')$. By Proposition \ref{staquo} there is then a connector between $\Eq(d')$ and $\Eq(c')$, thus
$$
      \xymatrix {
    X'_1 \ar@<6pt>[r]^{d'} \ar@<-6pt>[r]_{c'} & X'_0 \ar[l]|-{e'}
}
$$
is a groupoid (Proposition~\ref{Example}).

\item[(ii) $\Rightarrow$ (i)] This implication follows from Theorem~\ref{CKP} and the fact that equivalence relations are in particular groupoids (whose domain and codomain morphisms are jointly monomorphic).

\item[(i) $\Rightarrow$ (iii)] This implication follows from (i) $\Rightarrow$ (ii) and the fact that $\Grpd({\mathbb{C}}) \cong \Cat(\mathbb{C})$ in a Goursat context, as recalled above.

\item[(iii) $\Rightarrow$ (i)] Let $(R, r_1, r_2)$ be an equivalence relation on $X$, $f: X \twoheadrightarrow Y$ a regular epimorphism and $(f(R), t_1, t_2)$ the regular image of $R$ along $f$
$$
\label{image}
    \vcenter{\xymatrix {
     R \ar@{->>}[r]^{g} \ar@<.5ex>[d]^{r_2} \ar@<-.5ex>[d]_{r_1}  & f(R) \ar@<.5ex>[d]^{t_2} \ar@<-.5ex>[d]_{t_1} \\ X  \ar@{->>}[r]^{f} & Y.
}}
$$
$(f(R), t_1, t_2)$ is reflexive and symmetric being the image of the equivalence relation $R$ along a regular epimorphism $f$. By assumption, $(f(R), t_1, t_2)$ is an internal category, thus it is an equivalence relation. It follows that $\mathbb{C}$ is a Goursat category (by Theorem \ref{CKP}).
\qedhere
\end{description}

\end{proof}

\begin{remark}
Observe that Theorem \ref{caract} also implies that $\Grpd(\mathbb{C})$ and $\Cat(\mathbb{C})$ are Goursat categories whenever $\mathbb C$ is, again thanks to Lemma \ref{fullsub}, the category $\RG(\mathbb{C})$ obviously being a Goursat category. This simplifies and slightly extends Proposition $4.3$ in \cite{gro}, where the existence of coequalizers in $\mathbb C$ was assumed.
\end{remark}
\begin{remark}
A result analogous to Theorem \ref{caract} holds in the Mal'tsev context: a category $\mathbb{C}$ is a Mal'tsev category if and only if $\Grpd(\mathbb{C})$ (or, equivalently, $\Cat(\CC)$) is closed in $\RG(\mathbb{C})$ under subobjects~\cite{bournfib}. Together with the comments made before Proposition~\ref{staquo} we observe the existence of a sort of duality between Mal'tsev categories and Goursat categories. Similar results hold for Mal'tsev categories with respect to monomorphisms and for Goursat categories with respect to regular epimorphisms.
\end{remark}

\end{document}